\setlist{listparindent=0pt,parsep=3pt}
\newtheorem{theorem}{Theorem}[section]
\newtheorem{lemma}[theorem]{Lemma}
\newtheorem{proposition}[theorem]{Proposition}
\newtheorem{fact}[theorem]{Fact}
\theoremstyle{definition}
\newtheorem{definition}[theorem]{Definition}
\theoremstyle{remark}
\newtheorem{remark}[theorem]{Remark}
\newtheorem{example}[theorem]{Example}
\numberwithin{equation}{section}
\newcommand{\hthree}{\mathbb{H}^3}
\newcommand{\dsthree}{\mathbb{S}^3_1}
\newcommand{\sltc}{\mathrm{SL}(2,\mathbb{C})}
\newcommand{\hermtwo}{\mathrm{Herm}(2)}
\newcommand{\qthreep}{\mathbb{Q}^3_+}
\newcommand{\qthreem}{\mathbb{Q}^3_-}
\newcommand{\lfour}{\mathbb{L}^4}
\newcommand{\ithree}{\mathbb{I}^3}
\newcommand{\sltr}{\mathrm{SL}(2,\mathbb{R})}
\newcommand{\lasltr}{\mathfrak{sl}(2,\mathbb{R})}
\newcommand{\lasltc}{\mathfrak{sl}(2,\mathbb{C})}
\newcommand{\mbbH}{\mathbb{H}}
\newcommand{\mbbS}{\mathbb{S}}
\newcommand{\rmx}{\mathrm{x}}
\newcommand{\rmy}{\mathrm{y}}
\newcommand{\rmxt}{\mathrm{x}_0}
\newcommand{\rmxx}{\mathrm{x}_1}
\newcommand{\rmxy}{\mathrm{x}_2}
\newcommand{\rmxz}{\mathrm{x}_3}
\newcommand{\T}{\mathbf{T}}
\newcommand{\N}{\mathbf{N}}
\newcommand{\B}{\mathbf{B}}
\newcommand{\G}{\mathrm{G}}
\newcommand{\gammab}{\bm{\gamma}}
\newcommand{\f}{\bm{f}}
\newcommand{\tr}[1]{\operatorname{tr} #1}
\newcommand{\mcU}{\mathcal{U}}
\newenvironment{contentsinred}{\bgroup\color{red}}{\egroup}
\newcommand{\br}{\begin{contentsinred}}
\newcommand{\er}{\end{contentsinred}}
\begin{document}

\title[Ruled ZMC surfaces in $\qthreep$]{Ruled zero mean curvature surfaces \\ in the three-dimensional light cone}

\author[J.\ Cho]{Joseph Cho}
\address[Joseph Cho]{
Global Leadership School, Handong Global University, 558 Handong-ro Buk-gu, Pohang, Gyeongsangbuk-do 37554, Republic of Korea}
\email{jcho@handong.edu}

\author[D.\ Lee]{Dami Lee}
\address[Dami Lee]{
Department of Mathematics, Oklahoma State University, Stillwater, OK 74078, U.S.A.}
\email{dami.lee@okstate.edu}

\author[W.\ Lee]{Wonjoo Lee}
\address[Wonjoo Lee]{
Department of Mathematics, Korea University, Seoul 02841, Republic of Korea}
\email{wontail123@korea.ac.kr}

\author[S.-D.\ Yang]{Seong-Deog Yang}
\address[Seong-Deog Yang]{
Department of Mathematics, Korea University, Seoul 02841, Republic of Korea}
\email{sdyang@korea.ac.kr}

\subjclass[2020]{Primary: 53A10, Secondary: 53B30}
\keywords{zero mean curvature surface, ruled surface, catenoid, helicoid, light cone}

\begin{abstract}
	We obtain a complete classification of ruled zero mean curvature surfaces in the three-dimensional lightcone.
	En route, we examine geodesics and screw motions in the space form, allowing us to discover helicoids.
	We also consider their relationship to catenoids using Weierstrass representations of zero mean curvature surfaces in the three-dimensional lightcone.
\end{abstract}

\maketitle

\section{Introduction}\label{Section1}
Rooted in the discoveries of non-Euclidean geometry, it is often interesting to examine which Euclidean geometric concepts and results can be applied to other geometries.
Ever since Catalan showed that the standard helicoid is the only non-trivial ruled minimal surface in Euclidean space \cite{catalan_SurfacesRegleesDont_1842}, classification problem of ruled surfaces satisfying certain curvature restrictions in various space forms has received plethora of interest across various three-dimensional spaces.
These results include the study of:
\begin{itemize}
	\item ruled Weingarten surfaces in Lorentzian $3$-space \cite{DK1},
	\item ruled and helicoidal zero mean curvature surfaces in Lorentzian $3$-space $\mathbb{L}^3$ \cite{MP1},
	\item ruled zero mean curvature (ZMC) surfaces in $\mbbS^2 \times \mathbb{R}$, $\mbbH^2 \times \mathbb{R}$, Heisenberg group $\mathrm{Nil}_3$, and the Berger sphere \cite{kim_HelicoidsBbbS^2timesBbb_2009, shin_RuledMinimalSurfaces_2013, shin_RuledMinimalSurfaces_2015}, and
	\item ruled Weingarten hypersurfaces in hyperbolic spaces $\mbbH^{n+1}$, Lorentzian spaces and de Sitter spaces $\mbbS^{n+1}$ \cite{AV1,ACV1, ALV1}, and
	\item ruled ZMC surfaces in isotropic $3$-space \cite{Vogel}.
\end{itemize}

%Classically, E. Catalan showed in 1842 that the standard helicoid is the only non-trivial ruled minimal surface in $\mathbb{E}^3$.
%P. Mira and J. A. Pastor proved in 2003 that maximal surface is helicoidal if and only if it lies in the associate family of a catenoid in $\mbbL^3$ \cite{MP1}.
%Y. W. Kim, S-E. Koh, H. Y. Lee, H. Shin, and S-D. Yang characterized helicoids in $\mbbS^2 \times \mathbb{R}$ (2009), in $\mbbH^2 \times \mathbb{R}$ (2012), in $Nil3$ (2013), in Berger sphere (2015).  
%A. C. Asperti and  B. C. Val\'{e}rio classified in 2008 ruled Weingarten hypersurfaces in $\mbbS^{n+1}$ \cite{AV1}. 
%A. C. Asperti, R. M. B. Chaves and B. C. Val\'{e}rio obtained in 2012 a local classification of connected ruled Weingarten spacelike hypersurfaces in Lorentzian spaces and de Sitter spaces \cite{ACV1}.
%A. C. Asperti, A. Lymberopoulos and  B. C. Val\'{e}rio classified in 2014 ruled Weingarten hypersurfaces in $\mbbH^{n+1}$ \cite{ALV1}. 
%Y. Sato classified in 2019 ruled minimal surfaces in pseudo-Euclidean spaces  \cite{Sato1}.

The main purpose of this article is to present a classification of ruled ZMC surfaces in the three-dimensional light cone $\qthreep$, which is a space form equipped with a degenerate metric.
Its curve theory and surface theory have been developed in \cite{Liu, Liu2, Liu3, Liu4}, and we review and organize these in our preparatory Section~\ref{Section2}.
We also review rotational ZMC surfaces, called \emph{catenoids}, in $\qthreep$, found in \cite{CKLLY1}, as they will play an important role.

We begin our main discussion in Section~\ref{Section3} by considering \emph{geodesics} in $\qthreep$, as degenerate metric of the space form present a meaningful obstacle.
Then we find two important ruled ZMC surfaces in Section~\ref{Subsec:202503230110PM}, to serve as examples unveiling the required ansatz to obtain a classification of ruled ZMC surfaces in $\qthreep$.
In particular, noting that helicoids in Euclidean space can be characterized as ruled surfaces that are invariant under screw motions, we consider screw motions in Section~\ref{sect:screwMotion}, and show that the surface obtained via applying screw motions to a geodesic has ZMC in Section~\ref{SubSec:202503231257PM}, which we call \emph{helicoids}.
We also show that one of the catenoids reviewed, parabolic catenoid, is in fact a ruled surface in Section~\ref{SubSec:202503231258PM}.

A noted feature of ZMC surfaces in $\qthreep$ is that they admit a \emph{Weierstrass representation}\footnote{On contrary, ZMC surfaces in hyperbolic space or de Sitter space has a Weierstrass-type representation that is different in flavor, exploiting the method of infinite-dimensional Lie groups, known as loop groups.} \cite{Liu4, Liu2, seo_ZeroMeanCurvature_2021} as in the case of minimal surfaces in Euclidean space \cite{weierstrass_UntersuchungenUberFlachen_1866}.
Weierstrass representation allows for a zero mean curvature surface to be represented in terms of a meromorphic function and a holomorphic $1$-form called the \emph{Weierstrass data}.
Notably, the famous isometric deformation connecting catenoids to helicoids in Euclidean space admits a simple characterization in terms of its Weierstrass data, known as the associated family.
Thus, in Section~\ref{Section4}, we present a comprehensive examination of the relationship between catenoids and helicoids in $\qthreep$.
In particular, we show that, as in the Euclidean case, every helicoid is in the associated family of some catenoid (Theorem~\ref{thm:helicate}), but unlike the Euclidean case, only certain catenoids admit a helicoid in its associated family (Theorem \ref{thm:cateheli}).
Then, in Section~\ref{sec:lawson}, we introduce a Lawson-type correspondence between ZMC surfaces in the isotropic $3$-space and $\qthreep$ and compare the associated families of catenoid in each space form under the correspondence.

Finally, in Section~\ref{Section5}, we obtain a complete classification of ruled ZMC surfaces in $\qthreep$ (Theorem~\ref{Thm:202503130757AM}), showing that every ruled ZMC surface must be a helicoid or a parabolic catenoid up to isometries and homotheties of $\qthreep$.

%\medskip

\textbf{Acknowledgements.} The authors would like to thank Prof.\ Heayong Shin for his constant encouragements on this work.

%%%%%%%%%%%%%%%%%%%%%%%%%%%%%%%%%%%%%%%%%%%%%%%%%%%%%%
\section{Preliminaries}\label{Section2}
In this section, we briefly review the basic differential geometry of three dimensional lightcone, including curve and surface theory, mainly to set the notations to be used throughout. (We refer the readers to \cite{Liu, Liu4} for a detailed introduction.)

\subsection{$\qthreep$ as a quadric}
We identify Lorentzian $4$-space $\mathbb{L}^4$ with $\hermtwo$ as follows:
	\[
		\lfour \ni (\rmxt, \rmxx, \rmxy, \rmxz) \sim \begin{pmatrix} \rmxt+\rmxz & \rmxx+i\rmxy \\ \rmxx-i\rmxy & \rmxt-\rmxz \end{pmatrix} \in \hermtwo.
	\]
Then 
	\[
		\langle X, X \rangle = - \det{X}.
	\]
An arbitrary $F \in \sltc$ acts on $\mathbb{L}^4$ as an orientation preserving isometry via the action
	\begin{equation}\label{Eq:202503220510PM}
		\hermtwo \ni X   \mapsto F X F^\star \in \hermtwo
	\end{equation}
where $F^\star$ is the conjugate transpose of $F$.
In fact, $\sltc$ is a two-to-one covering of the group of orientation-preserving and origin-fixing isometries of $\lfour$.

The submanifolds of $\lfour$ we are interested in are
\begin{align*}
	\ithree &:= \{ X \in \hermtwo : \rmxt-\rmxz=0 \}, \\
	\qthreep &:= \{ X \in \hermtwo : \langle X, X \rangle = 0, \tr{X} >0 \}, \\
	\qthreem &:= \{ X \in \hermtwo : \langle X, X \rangle = 0, \tr{X} <0 \}.
\end{align*}
In particular, the action of $\sltc$ given by \eqref{Eq:202503220510PM} acts as orientation preserving isometries of $\qthreep$. 
Conversely, any orientation preserving isometry of $\qthreep$ can be described by this action.

\begin{example}\label{exam:rotation}
	Rotations in $\qthreep$ can be described as isometries that fix a $2$-dimensional subspace of $\lfour$ (see, for example, \cite{docarmo_RotationHypersurfacesSpaces_1983, CKLLY1}).
	Normalizing the $2$-dimensional subspace based on the induced metric, they can be described by 
		$$
			D_1(\mu) := \begin{pmatrix} \mu & 0 \\  0 & \frac{1}{\mu} \end{pmatrix},\quad
			D_2(\mu) := \begin{pmatrix} 0 & \mu \\  -\frac{1}{\mu} & 0 \end{pmatrix},\quad
			P(\mu) := \begin{pmatrix} 1 & \mu \\  0 & 1 \end{pmatrix}.\
		$$
	for some $\mu \in \mathbb{C}$.
\end{example}

\subsection{Curve theory}\label{sect:curve}
Given a unit-speed regular curve $\gamma : I \to \qthreep \subset \lfour$ given on an interval $I$, define
	\[
		\kappa := -\frac{1}{2} \langle \gamma'', \gamma'' \rangle, \quad \T := \gamma', \quad\text{and}\quad \N := \kappa \gamma - \gamma''. 
	\]
Then there exist uniquely a function $\tau$ and a vector field $\B$ along $\gamma$ which form a null basis of $\lfour$ with $\det(\gamma, \T, \N, \B) < 0$:

	\begin{gather*}
		\langle X,Y \rangle \\
		\begin{array}{|c|c|c|c|c|}
			\hline
			\text{\diagbox{$X$}{$Y$}} &\gammab	& \T		& 	\N			& \B		\\ \hline
			\gammab	&	0			& 	0			& 	1	& 0		\\ \hline
			\T				&     	0		& 	1 			& 	0 	& 0		\\ \hline
			\N				&	1 	& 	0	& 	0 			& 0		\\ \hline
			\B				&	0 	& 	0	& 	0 			& 1		\\ \hline
		\end{array}
		\end{gather*}
		
Using such $\{\gamma, \T, \N, \B\}$ as a moving frame along the curve $\gamma$, the Frenet equations are given by
	\begin{equation}\label{Eq:202503190222PM}
		\begin{pmatrix} \gamma' \\  \T'  \\  \N' \\  \B' \end{pmatrix}
		=
		\begin{pmatrix} 0 & 1 & 0 & 0  \\  \kappa & 0 & -1 & 0  \\ 0 & -\kappa & 0 & -\tau \\ \tau & 0 & 0 & 0  \end{pmatrix}
		\begin{pmatrix} \gamma \\  \T \\  \N \\  \B \end{pmatrix}, 
	\end{equation}
The values  $\kappa$ and $\tau$ are referred to as the \emph{cone curvature} and the \emph{cone torsion}, respectively.
See \cite{Liu, Liu3} for details.

\subsection{Surface theory}
Given an immersion $X: \mcU \subset \mathbb{R}^2 \to \qthreep$ with coordinates $(u,v) = (u^1, u^2) \in \mathcal{U}$, there is a unique map $\G : \mcU \to \qthreem$ which satisfies
	\[
		\langle \G, \G \rangle = \langle \G, X_u \rangle = \langle \G, X_v \rangle = \langle \G, X \rangle -1 = 0,
	\]
which is called the \emph{lightlike Gauss map} of $X$ \cite{izumiya_LightconeGaussMap_2004} (also referred to as the \emph{associated surface} \cite{Liu4}).
The first and the second fundamental forms of $X$ are then given by
	\[
		\mathbf{g} :=   \langle X_i, X_j \rangle \dif{u}^i \dif{u}^j , \quad
		\mathbf{A} :=  \langle \G, X_{ij} \rangle \dif{u}^i \dif{u}^j,
	\]
respectively, from which the definition of mean curvature and the (extrinsic) Gauss curvature for $X$ follows:
	\[
		\mathrm{H} := \frac{1}{2} \tr{(\mathbf{g}^{-1}\mathbf{A})}, \quad
		\mathrm{K} := \det{(\mathbf{g}^{-1}\mathbf{A})}.
	\]
When a surface has $\mathrm{H} \equiv c \neq 0$ for some constant $c \in \mathbb{R}$, then the surface is called a \emph{constant mean curvature (CMC) surface}, while if it has $\mathrm{H} \equiv 0$, then the surface is referred to as a \emph{zero mean curvature (ZMC) surface}.

When a surface is conformally parametrized, we will introduce the complex structure via $z = u + i v$.

An immersion $X: \mathcal{U} \to \qthreep$ admits three different representations, each of which we will make use in this manuscript:
An immersion $X$ can be viewed as a graph of any function $f : \mathcal{U} \to \qthreep$ over the surface
	\[
		(u,v) \mapsto \begin{pmatrix} u^2+v^2 & u +iv \\ u-iv & 1 \end{pmatrix}
	\]
so that
	\[
		X(u,v) = e^{f(u,v)} \begin{pmatrix} u^2+v^2 & u +iv \\ u-iv & 1 \end{pmatrix}.
	\]
When we represent an immersion $X$ as a graph using the function $f$, we will denote it by $X_f$.

On the other hand, using the fact that the map
	\[
	\mathbb{C}^2\setminus\{(0,0)\} \ni \begin{pmatrix} z \\ w \end{pmatrix}  \mapsto
	 \begin{pmatrix} z \\ w \end{pmatrix} \begin{pmatrix} z \\ w \end{pmatrix}^\star 
	 = \begin{pmatrix} z\bar{z} & z\bar{w}\\ \bar{z}w & w\bar{w}\end{pmatrix} \in \qthreep
	\]
is onto, we can represent an immersion via
	\[
		X = \begin{pmatrix} A \\ C \end{pmatrix} \begin{pmatrix} A \\ C \end{pmatrix}^\star =: \varphi \varphi^\star
	\]
for some complex-valued functions $A$ and $C$ defined on $\mathcal{U}$.
We will refer to such $\varphi : \mathcal{U} \to \mathbb{C}^2$ as the \emph{lift of $X$ into $\mathbb{C}^2$}.

Finally, for any $F : \mathcal{U} \to \mathrm{SL}(2,\mathbb{C})$, we can also represent an immersion $X$ via
	\[
		X = F \begin{pmatrix} 1 & 0 \\ 0 & 0 \end{pmatrix} F^\star.
	\]
We will refer to such $F$ as the \emph{lift of $X$ into $\mathrm{SL}(2,\mathbb{C})$}.
Using this last characterization, we introduce the following notion:
\begin{definition}
	We say two immersions $X$ and $\tilde{X}$ are \emph{equal up to isometries and homothety of $\qthreep$} if they satisfy $X(s,t) = r F X(s,t) F^\star$ for some $r \in \mathbb{R}^+$ and $F \in \sltc$, and denote this as
	\[
		X \simeq \tilde{X}.
	\]
\end{definition}

\begin{remark}\label{Rmk:202503160842PM}
	For $c \in \mathbb{C}\setminus\{0\}$, we have
		\[
			\begin{pmatrix} c & 0 \\ 0 & c^{-1} \end{pmatrix}
			\begin{pmatrix} 1 & 0 \\ 0 & 0 \end{pmatrix}
			\begin{pmatrix} c & 0 \\ 0 & c^{-1} \end{pmatrix}^\star
			=
			c\bar{c}\begin{pmatrix} 1 & 0 \\ 0 & 0 \end{pmatrix}.
		\]
	If $F_c := F \begin{pmatrix} c & 0 \\ 0 & c^{-1} \end{pmatrix}$, then
		\begin{gather*}
			X_c := F_c  \begin{pmatrix} 1 & 0 \\ 0 & 0 \end{pmatrix} F_c^\star 
				= c\bar{c} F  \begin{pmatrix} 1 & 0 \\ 0 & 0 \end{pmatrix} F^\star  = c\bar{c} X,
			%F_c^{-1} dF_c = F^{-1} dF, 
			\quad \dif{F}_c F_c^{-1}  = \dif{F} F^{-1}.
\end{gather*}
In other words, homothetic images of $X$ can be obtained via $F_c$.
\end{remark}

\subsection{Representations of ZMC immersions}\label{SubS:202503220849PM}
Suppose $X = X_f$ is an immersion regarded as a graph using some function $f : \mathcal{U} \to \qthreep$.
Then the mean curvature of $X_f$ can be calculated as
	\begin{equation}\label{Eq:202503160928PM}
		\mathrm{H} = \frac{1}{2} e^{-2f(u,v)} (f_{uu}(u,v) + f_{vv}(u,v)).
	\end{equation}
\begin{example}[A family of CMC surfaces in $\qthreep$]
Let us consider the surface $X_f : \mathcal{U} \to \qthreep$ given via the function $f : \mathcal{U} \to \qthreep$
	\[
		f(u,v) := d + \ln{\operatorname{sech}{(a u+bv+c)}}
	\]
for some constants $a,b,c,d \in \mathbb{R}$ with $a^2+b^2 \not= 0$.
Then one can directly check that the mean curvature of $X_f$ is
	\[
		\mathrm{H} = - e^{-2d}(a^2+b^2)/2.
	\]
\end{example}
If $\mathrm{H} \equiv 0$ so that $X_f$ is a ZMC surface, then $f$ must be harmonic, so $f = \varphi + \bar{\varphi}$ for some holomorphic function $\varphi$ in a simply connected domain.
Then, after a conformal change of parameters if necessary, we see the following:
\begin{lemma}[\cite{Liu2}]
	$X : \mcU \to \qthreep$ has ZMC if and only if
		\[
%		\label{Eq:202503220930PM}
			X(z) = \varphi(z) \varphi(z)^\star, \quad \varphi(z) 
			%:= e^{\phi(z)} \begin{pmatrix} z  \\ 1 \end{pmatrix}
			%= e^{\psi(w)} \begin{pmatrix} \mu(w)  \\ 1 \end{pmatrix}
			:= \begin{pmatrix} A(z)  \\ C(z) \end{pmatrix}
		\]
	for some holomorphic functions $A$ and $C$. 
\end{lemma}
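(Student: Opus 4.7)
The plan is to derive both directions from the graph representation $X = X_f$ combined with the mean curvature formula \eqref{Eq:202503160928PM}, which reduces $\mathrm{H}\equiv 0$ to harmonicity of $f$.

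For the forward direction, I would locally represent the ZMC immersion as a graph $X = X_f$. The formula \eqref{Eq:202503160928PM} immediately gives that $f$ is harmonic, so on a simply connected domain we may write $f = h + \bar h$ for some holomorphic function $h = h(z)$ of $z := u+iv$. Using $u^2+v^2 = z\bar z$, $u+iv = z$, $u-iv = \bar z$, the computation
\[
X_f = e^{h(z)+\overline{h(z)}} \begin{pmatrix} z\bar z & z \\ \bar z & 1 \end{pmatrix} = \left(e^{h(z)} \begin{pmatrix} z \\ 1 \end{pmatrix}\right)\left(e^{h(z)} \begin{pmatrix} z \\ 1 \end{pmatrix}\right)^{\star}
\]
yields the desired factorization with $A(z) := z\, e^{h(z)}$ and $C(z) := e^{h(z)}$, both holomorphic.

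For the converse, suppose $X = \varphi \varphi^{\star}$ with $\varphi = (A, C)^\top$ holomorphic. Since $X$ takes values in $\qthreep$, $\operatorname{tr} X = |A|^2+|C|^2 > 0$ pointwise, so locally either $A$ or $C$ is nonvanishing; assume the latter. Introduce the holomorphic function $\tilde z := A/C$. Because $X$ is an immersion, $\tilde z$ cannot be constant---otherwise $X$ would take values in a single complex line---so $(A/C)'$ vanishes only on a discrete set, and away from this set $\tilde z$ provides a biholomorphic reparametrization. In this coordinate,
\[
X = |C|^2 \begin{pmatrix} \tilde z \\ 1 \end{pmatrix} \begin{pmatrix} \bar{\tilde z} & 1 \end{pmatrix} = X_f \quad \text{with} \quad f = \log|C|^2.
\]
Viewed as a function of $\tilde z$, the function $C$ remains holomorphic, so $f = \log C + \log\bar C$ is locally harmonic, and \eqref{Eq:202503160928PM} gives $\mathrm{H}\equiv 0$ on the open dense set where $\tilde z$ is a valid coordinate. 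Continuity extends this to all of $\mcU$.

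The only delicate point is the backward direction: justifying the coordinate change $\tilde z = A/C$ and handling its isolated critical points. Both are resolved by the immersion hypothesis together with the openness of regular points of $\tilde z$, so I do not expect a serious obstacle.
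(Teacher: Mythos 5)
Your proposal is correct and follows essentially the paper's own route: the paper obtains this lemma (cited to Liu) exactly from the graph representation $X_f$ together with the mean-curvature formula \eqref{Eq:202503160928PM}, writing the harmonic $f$ as $h+\bar h$ and factoring $e^{h+\bar h}\begin{psmallmatrix} z\bar z & z\\ \bar z & 1\end{psmallmatrix}=\varphi\varphi^\star$, and your converse is the natural inversion of the same graph picture via the coordinate $\tilde z = A/C$. One small streamlining: since the induced metric of $X=\varphi\varphi^\star$ is $|A'C-AC'|^2\,|\mathrm{d}z|^2$ up to a constant factor, the (spacelike) immersion hypothesis already forces $(A/C)'\neq 0$ wherever $C\neq 0$, so your ``discrete critical set plus continuity'' step is vacuous and can be dropped.
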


Now we would like to see how the lift of a ZMC immersion $X$ to $\mathbb{C}^2$ induces the lift into $\mathrm{SL}(2,\mathbb{C})$:
A holomorphic $F \in C^\omega(\mcU, \sltc)$ is called \emph{null} if $\det{F_z}=0$.
Given an arbitrary holomorphic $\varphi(z) := \begin{psmallmatrix} A(z) \\ C(z) \end{psmallmatrix}$ that is the lift of a ZMC immersion to $\mathbb{C}^2$, one can find a null-holomorphic
	\[
		F = \begin{pmatrix} A & B \\ C & D \end{pmatrix} \in C^\omega(\mcU, \sltc)
	\]
such that 
	\[
		F \begin{pmatrix} 1 & 0 \\ 0 & 0 \end{pmatrix} F^\star 
			=  \begin{pmatrix} A \\ C  \end{pmatrix} \begin{pmatrix} A \\ C  \end{pmatrix}^\star
	\]
via
	\begin{equation}\label{Eq:202503230200PM}
		F = 
			\begin{pmatrix} A & 0 \\ C & A^{-1} \end{pmatrix}
			\begin{pmatrix} 1 & -E \\ 0 & 1 \end{pmatrix},
			\quad
		E:=\int \frac{ ((1/A)')^2}{ (C/A)'} \dif{z}.
	\end{equation}
For a given null-holomorphic $F$ we define meromorphic functions $G, g$ and holomorphic $1$-forms $\Omega, \omega$ by
	\begin{equation}\label{Eq:202501250902AM}
		\dif{F} F^{-1} = \begin{pmatrix} G & -G^2 \\ 1 & -G \end{pmatrix} \Omega, \quad
		F^{-1}\dif{F}  = \begin{pmatrix} g & -g^2 \\ 1 & -g \end{pmatrix} \omega.
	\end{equation}
This is the \emph{Weierstrass representation} of ZMC surfaces in $\qthreep$ \cite{Liu2, Liu4, seo_ZeroMeanCurvature_2021, pember_WeierstrasstypeRepresentations_2020}, and $(g, \omega)$ is called the \emph{Weierstrass data}.
We call $G$ and $g$ the hyperbolic Gauss map and the secondary Gauss map, respectively.
The Hopf differential is
	\[
		Q := \Omega \dif{G} = \omega \dif{g}.
	\]
For $\lambda \in \mathbb{C}\setminus\{0\}$, the change of the Weierstrass data
	\begin{equation}\label{Eq:202502051202PM}
		(g, \omega) \mapsto (g, \lambda \omega)
	\end{equation}
induces a transformation where the metric, Hopf differential, and the second fundamental form transform as follows:
	\[
		\mathbf{g} \mapsto |\lambda|^2 \mathbf{g},	\quad
		Q \mapsto \lambda Q, 				\quad
		\mathbf{A} = 2 \operatorname{Re} Q \mapsto  \mathbf{A}_\lambda = 2 \operatorname{Re} \lambda Q.
	\]
When $|\lambda|^2 = 1$, this gives an isometric deformation of a ZMC surface, commonly referred to as the \emph{associated family of ZMC surfaces}.

\subsection{Catenoids in $\qthreep$}\label{section2.3}
As examples of ZMC surfaces, we review catenoids in $\qthreep$, ZMC surfaces that are invariant under rotations \cite{CKLLY1}.
For real constants $a,b,c$, consider
	\begin{equation}\label{Eq:202503170717PM}
		\varphi^E_a(z) := e^{iaz} \begin{pmatrix} e^{iz} \\ e^{-iz} \end{pmatrix}, \quad
		\varphi^H_b(z) := e^{ibz} \begin{pmatrix} e^{z} \\ e^{-z} \end{pmatrix}, \quad
		\varphi^P_c(z) := e^{icz} \begin{pmatrix} z \\ 1 \end{pmatrix}.
	\end{equation}
They are lifts to $\mathbb{C}^2$ of elliptic, hyperbolic, parabolic catenoids in $\qthreep$, respectively. 
Their lifts to $\sltc$ are
	\begin{align*}
		F^E_a(z) &:= \begin{pmatrix} e^{i (a+1) z} &  -\frac{(a+1)^2 }{4 a} e^{-i (a-1)    z} \\  e^{i (a-1) z} &    -\frac{(a-1)^2 }{4 a} e^{-i (a+1)    z}\end{pmatrix}, \\
		F^H_b(z) &:= \begin{pmatrix} e^{ (i b+1) z} & -\frac{i    (b-i)^2 }{4 b} e^{-(i b-1) z}   \\  e^{(ib-1) z} & -\frac{i    (b+i)^2 }{4 b} e^{-(i b+1) z} \end{pmatrix}, \\
		F^P_c(z) &:= \begin{pmatrix} z e^{i c z} & -\frac{1}{2} (2+i c z) e^{-i c z}  \\  e^{i c z} & -\frac{1}{2} i c  e^{-i c z}\end{pmatrix}.
	\end{align*}
Their Weierstrass data can be written as 
	\[
		g^E(w)=g^H(w)=g^P(w)=w
	\]
and
	\begin{equation}\label{Eq:202502070726AM}
		\omega^E = \left( -\frac{1}{4} + \frac{1}{4a^2} \right)	\frac{\operatorname{d}\!w}{w^2},	\quad
		\omega^H = \left( -\frac{1}{4} - \frac{1}{4b^2} \right)	\frac{\operatorname{d}\!w}{w^2},	\quad
		\omega^P = -\frac{1}{4} \frac{\operatorname{d}\!w}{w^2}.
	\end{equation}
\begin{remark}
	Note that there is only one parabolic catenoid up to isometries and homothety of $\qthreep$. 
	For CMC-1 surfaces in de Sitter three-space, a similar fact has been observed in \cite{fujimori_AnalyticExtensionsConstant_2022}.
\end{remark}

For elliptic catenoids, the time component $\rmxt(u,v)$ of $X^E_a(u,v) := F^E_a(z)(F^E_a(z))^\star$ satisfies $\rmxt(u,v) = e^{-2av} \cosh{2v}. $
So, for any $u$,
	\[
		\begin{array}{|c|c|c|c|} \hline
			&	a < -1 & -1 < a < 1 & 1 < a \\				\hline
			\lim_{v \to \infty} \rmxt(u,v) & +\infty & +\infty & 0 \\		\hline
			\lim_{v \to -\infty} \rmxt(u,v) & 0 & +\infty & +\infty 	\\	\hline
		\end{array}
	\]
In other words, the two ends go to the future boundary if and only if $0<-\tfrac{1}{4}+\tfrac{1}{4a^2}$. 
One end goes to the future boundary and the other end goes to the origin if and only if  $-\tfrac{1}{4}<-\tfrac{1}{4}+\tfrac{1}{4a^2}<0$.

\section{Geodesics of $\qthreep$}\label{Section3}
As the induced metric on $\qthreep$ is degenerate, we will consider geodesics in $\qthreep$ by using characterizations of geodesics in better known quadrics of $\lfour$: hyperbolic $3$-space $\hthree$ and de Sitter $3$-space $\dsthree$.
In both cases, geodesics are obtained as intersections of each corresponding space and a two dimensional plane in $\lfour$ which passes through the origin; thus, they are curves in a totally geodesic surface, i.e.\ a totally umbilic surface with vanishing mean and (extrinsic) Gaussian curvature, whose geodesic curvature vanishes.

\subsection{Planes in $\qthreep$}
We first define planes as follows:
\begin{definition}
	A \emph{plane} is an immersion in $\qthreep$ that is totally geodesic, that is, the second fundamental form $\mathbf{A}$ vanishes everywhere.
\end{definition}
We will characterize planes within the class of totally umbilic surfaces in $\qthreep$: they are given by intersections with affine $3$-planes, namely, 
	\[
		S[M,q] := \{ X \in \qthreep : \langle X,M\rangle = q\}
	\]
for some constant $M \in \lfour$ and $q \in \mathbb{R}\setminus \{0\}$.
Then it can be directly checked that the lightlike Gauss map of $S[m,q]$ is given by
	\[
		\G = -\frac{1}{2q^2} \langle M,M\rangle X + \frac{1}{q} M
	\]
so that its mean and Gaussian curvature satisfies
	\[
		\mathrm{H} = \frac{1}{2q^2} \langle M,M\rangle, \quad \mathrm{K} = \frac{1}{4q^4}\langle M,M \rangle^2.
	\]
Thus:
\begin{lemma}
	An immersion is part of a plane if and only if it is part of $P[M,q]$ defined via
		\[
			P[M,q] := \{ X \in \qthreep : \langle X,M\rangle = q, \langle M, M \rangle = 0\}.
		\]
\end{lemma}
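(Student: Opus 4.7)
The plan is to characterize planes via the lightlike Gauss map $\G$, showing that a surface has $\mathbf{A} \equiv 0$ if and only if $\G$ is constant, which in turn is equivalent to the surface lying in $S[M, q]$ with $\langle M, M\rangle = 0$. The ``if'' direction is almost immediate from the formula $\G = -\frac{1}{2q^2}\langle M, M\rangle X + \frac{1}{q}M$ already computed for $S[M, q]$: when $\langle M, M\rangle = 0$ one has $\G = \frac{1}{q}M$, hence $\G_j = 0$ for $j = 1, 2$; differentiating $\langle \G, X_i\rangle = 0$ then yields $\langle \G, X_{ij}\rangle = 0$, so $\mathbf{A} \equiv 0$ on $P[M, q]$.

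For the ``only if'' direction, I would show that $\mathbf{A} \equiv 0$ forces $\G$ to be constant. Differentiating the defining identities $\langle \G, \G\rangle = 0$, $\langle \G, X_i\rangle = 0$, and $\langle \G, X\rangle = 1$, and combining with $\langle \G, X_{ij}\rangle = 0$, one obtains $\langle \G_j, \G\rangle = \langle \G_j, X_i\rangle = \langle \G_j, X\rangle = 0$. The next step is to verify that $\{X, X_1, X_2, \G\}$ spans $\lfour$: the normalization $\langle \G, X\rangle = 1$ separates $X$ from any linear combination of $X_1, X_2$ (which pair trivially with $\G$), while $\G$ is independent of $\Span\{X, X_1, X_2\}$ because $\langle \G, X\rangle \neq 0$ whereas $\langle X, X\rangle = \langle X, X_i\rangle = 0$. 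Non-degeneracy of the Lorentzian inner product on $\lfour$ then forces $\G_j = 0$, so $\G \equiv M$ for some constant null vector $M$ with $\langle X, M\rangle \equiv 1$, placing $X$ inside $P[M, 1]$.

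The only genuinely delicate step is the linear-independence argument used to pass from ``$\G_j$ is orthogonal to $X, X_1, X_2, \G$'' to ``$\G_j = 0$''; it relies crucially on the normalization $\langle \G, X\rangle = 1$, which is exactly what prevents the position vector $X$ from lying in the tangent plane of the immersion. Once $\G$ is pinned down as a constant null vector, the rest of the proof is a direct rereading of the formulas established in the preceding paragraph.
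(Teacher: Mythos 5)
Your proof is correct, but it takes a genuinely different route from the paper's. The paper argues through umbilicity: a totally geodesic immersion is in particular totally umbilic, totally umbilic surfaces are (as asserted, following the known classification) exactly the affine slices $S[M,q]$, and the computed formulas $\mathrm{H}=\tfrac{1}{2q^2}\langle M,M\rangle$, $\mathrm{K}=\tfrac{1}{4q^4}\langle M,M\rangle^2$ then show that such a slice is a plane precisely when $\langle M,M\rangle=0$. You instead prove directly that $\mathbf{A}\equiv 0$ if and only if the lightlike Gauss map $\G$ is constant: differentiating $\langle\G,\G\rangle=0$, $\langle\G,X_i\rangle=0$, $\langle\G,X\rangle=1$ and using $\langle\G,X_{ij}\rangle=0$ gives that $\G_j$ is orthogonal to $\{X,X_1,X_2,\G\}$, which is a basis of $\lfour$ (your independence argument is fine: pairing a relation with $\G$ kills the $X$-coefficient, pairing with $X$ kills the $\G$-coefficient, and $X_1,X_2$ are independent since the immersion is spacelike), so nondegeneracy forces $\G_j=0$; a constant null $\G=M$ with $\langle X,M\rangle\equiv 1$ is exactly containment in $P[M,1]$, and the converse is the same computation read backwards. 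What each approach buys: the paper's proof is a two-line corollary once the umbilic classification is granted, but that classification is invoked rather than proved; your argument is self-contained, avoids the umbilic classification entirely, and is the natural adaptation of the classical "totally geodesic iff constant Gauss map" argument to the degenerate ambient metric. The only points worth making explicit are that the induced metric is positive definite (spacelike), which is what guarantees $\{X,X_1,X_2,\G\}$ spans $\lfour$ and is implicit throughout the paper's surface theory, and that in the forward direction the constant vector $\tfrac{1}{q}M$ does satisfy all four defining conditions of the lightlike Gauss map on $P[M,q]$ (nullity of $M$, orthogonality to $X_u,X_v$ from differentiating $\langle X,M\rangle=q$, and the normalization), which you get from the paper's already-computed formula for $\G$ on $S[M,q]$.
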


\subsection{Geodesics}
Now that we have the notion of a plane, we define geodesics as follows:
\begin{definition}
	A regular curve in a plane of $\qthreep$ is called a \emph{geodesic} if its geodesic curvature vanishes in the plane.
\end{definition}

To obtain an explicit formulation of geodesics, let us suppose that a plane is given via $P[M,q]$ with $\langle M,M\rangle = 0$; we will now calculate the metric induced on $P[M,q]$ from the ambient space by constructing a coordinate chart.
Choosing any vector $\tilde{M} \in \lfour$ such that $\langle M, \tilde{M}\rangle = q \neq 0$, note that $\mathrm{span}\{M, \tilde{M}\}$ must have signature $(- +)$.
Thus,
	\[
		\mathfrak{R} := \mathrm{span}\{M, \tilde{M}\}^\perp \cong \mathbb{E}^2
	\]
where $\mathbb{E}^2$ denotes the usual Euclidean $2$-plane.

Let us define a bijection $\psi : \mathfrak{R} \to P[M,q]$ by
	\[
		\psi(Y) = Y + \tilde{M} - \frac{1}{2q} \langle Y,Y\rangle M
	\]
with inverse
	\[
		\psi^{-1}(X) = X - \tilde{M} - \frac{1}{q}\langle X, \tilde{M}\rangle M.
	\]
Viewing $\psi$ as a coordinate chart, let $\gamma : I \to P[m,q]$ be a unit speed curve with $Y : I \to \mathfrak{R}$ such that
	\[
		\gamma = \psi \circ Y.
	\]
Then since
	\[
		\gamma' = (Y + \tilde{M} - \tfrac{1}{2q} \langle Y,Y\rangle M)' = Y' - \frac{1}{2q}\langle Y, Y'\rangle M,
	\]
it must follow
	\[
		\langle \gamma', \gamma'\rangle = \langle Y', Y' \rangle.
	\]
Therefore, any plane $P[m,q]$ is isometric to $\mathfrak{R} \cong \mathbb{E}^2$, and thus $\gamma : I \to P[m,q]$ is a curve in $P[m,q]$ with vanishing geodesic curvature if and only if $Y := \psi^{-1}\circ \gamma : I \to \mathfrak{R} \cong \mathbb{E}^2$ is a line.

To obtain explicit parametrizations of geodesics, let $Y: I \to \mathfrak{R}$ be a line parametrized by arc-length so that
	\[
		Y(s) = Vs + W
	\]
for any constant $V,W \in \mathfrak{R} = \mathrm{span}\{M, \tilde{M}\}^\perp$ with $\langle V, V \rangle = 1$.
We then have
	\begin{align*}
		\gamma(s) = \psi \circ Y(s) &= Vs + W + \tilde{M} - \tfrac{1}{2q}(s^2 + 2 \langle V, W\rangle s + \langle W,W\rangle)M \\
			&= -\tfrac{1}{2q}M s^2 + (V - \tfrac{1}{q} \langle V, W\rangle M) s + W + \tilde{M} -\tfrac{1}{2q} \langle W, W \rangle M\\
			&:= \frac{1}{2}\vec{a} s^2 + \vec{b} s + \vec{c}.
	\end{align*}
Then we can check directly that
	\begin{equation}\label{eqn:geodesicCond}
		\langle \vec{a}, \vec{a} \rangle = \langle \vec{a}, \vec{b} \rangle = \langle \vec{b},\vec{c}\rangle = \langle \vec{c}, \vec{c} \rangle = 0, \quad
		\langle \vec{a},\vec{c}\rangle= -\langle \vec{b},\vec{b} \rangle = 1.
	\end{equation}

On the other hand, let $\gamma : I \to \qthreep$ given via
	\[
		\gamma(s) = \frac{1}{2}\vec{a} s^2 + \vec{b} s + \vec{c}
	\]
where $\vec{a},\vec{b},\vec{c} \in \lfour$ satisfies \eqref{eqn:geodesicCond}.
Then we have
	\[
		\langle \gamma(s), \vec{a} \rangle = -1,
	\]
so that $\gamma$ is a curve in the plane $P[\vec{a}, -1]$.
Therefore, using $\psi^{-1}: P[\vec{a}, -1] \to \mathfrak{R}$, we write $Y := \psi^{-1} \circ \gamma$ with $\vec{c} = \tilde{M}$ as
	\[
		Y = \tfrac{1}{2}\vec{a} s^2 + \vec{b} s + \vec{c} - \vec{c} + \langle \tfrac{1}{2}\vec{a} s^2 + \vec{b} s + \vec{c}, \vec{c} \rangle \vec{a} = \vec{b} s
	\]
so that $Y$ is a line.

Summarizing:
\begin{theorem}\label{thm:geodesic}
	A curve $\gamma : I \to \qthreep$ is a geodesic of $\qthreep$ if and only if there exists some $\vec{a}, \vec{b}, \vec{c} \in \lfour$ satisfying
		\begin{equation}\label{Eq:202501110701AM}
			\begin{array}{|c|c|c|c|}
				\multicolumn{4}{c}{\langle X,Y \rangle }  \\  \hline
				\text{\diagbox{$X$}{$Y$}} & \vec{a}	& \vec{b} & 	\vec{c} 	\\ \hline
				\vec{a}	&	0		& 	0	& 	-1 		\\ \hline
				\vec{b}	&     0		& 	1 	& 	0 		\\ \hline
				\vec{c}	&	-1		& 	0	& 	0 		\\ \hline
				\end{array}
		\end{equation}
	such that
		\begin{equation}\label{eqn:geodesicEq}
			\gamma(s) = \frac{1}{2}\vec{a} s^2 + \vec{b} s + \vec{c}.
		\end{equation}
\end{theorem}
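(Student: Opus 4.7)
The plan is to exploit the isometry $\psi : \mathfrak{R} \to P[M,q]$ already constructed in the excerpt so that geodesics in a plane of $\qthreep$ correspond exactly to straight lines in the Euclidean $2$-plane $\mathfrak{R}$, and then to read off the quadratic form of the geodesic in $\lfour$ together with the inner product relations \eqref{Eq:202501110701AM}.

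For the forward direction, I would start with a geodesic $\gamma : I \to \qthreep$. By definition, $\gamma$ lies in some plane $P[M,q]$ with $\langle M,M\rangle = 0$. Using the explicit coordinate chart $\psi$ and the computation $\langle \gamma',\gamma'\rangle = \langle Y',Y'\rangle$ already carried out in the text, the curve $Y := \psi^{-1}\circ \gamma$ is a unit-speed curve in the Euclidean plane $\mathfrak{R}$. Since $\psi$ is an isometry and $P[M,q]$ is totally geodesic in $\qthreep$, the vanishing of the geodesic curvature of $\gamma$ in $P[M,q]$ pulls back to the vanishing of the geodesic curvature of $Y$ in $\mathfrak{R} \cong \mathbb{E}^2$, so $Y(s) = Vs + W$ for constants $V,W \in \mathfrak{R}$ with $\langle V,V\rangle = 1$. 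Substituting into $\psi$ gives
\[
  \gamma(s) = -\tfrac{1}{2q}M\,s^2 + \bigl(V - \tfrac{1}{q}\langle V,W\rangle M\bigr)s + \bigl(W + \tilde M - \tfrac{1}{2q}\langle W,W\rangle M\bigr),
\]
and defining $\vec{a},\vec{b},\vec{c}$ to be these three coefficients, the required inner product relations \eqref{Eq:202501110701AM} follow by routine bilinear computation using $\langle M,M\rangle = 0$, $\langle M,\tilde M\rangle = q$, $\langle V,V\rangle = 1$, and $V,W \perp \mathrm{span}\{M,\tilde M\}$.

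For the converse, suppose $\gamma(s) = \tfrac{1}{2}\vec{a}s^2 + \vec{b}s + \vec{c}$ with the relations \eqref{Eq:202501110701AM}. A direct computation gives $\langle \gamma(s),\gamma(s)\rangle = 0$, $\tr \gamma(s) > 0$ (possibly after a trivial sign fix on $\vec{c}$), and $\langle \gamma(s),\vec{a}\rangle = -1$, so $\gamma$ lies in the plane $P[\vec{a},-1]$, which is admissible because $\langle \vec{a},\vec{a}\rangle = 0$. Taking $\tilde M = \vec{c}$ in the construction of $\psi$, the inverse image $Y := \psi^{-1}\circ\gamma$ simplifies — using $\langle \gamma,\vec{c}\rangle = -\tfrac{1}{2}s^2\langle \vec{a},\vec{c}\rangle = -\tfrac{1}{2}s^2$ — to $Y(s) = \vec{b}\,s$, which is a line in $\mathfrak{R}$, hence a geodesic of $\mathbb{E}^2$; transporting back through the isometry $\psi$ shows $\gamma$ is a geodesic of the plane $P[\vec{a},-1]$.

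The only mildly delicate step is the first-direction bookkeeping: I need to verify that after rewriting $\gamma$ in the form $\tfrac{1}{2}\vec{a}s^2 + \vec{b}s + \vec{c}$, the six inner product identities of \eqref{Eq:202501110701AM} all hold simultaneously. This is the main obstacle, but it is purely mechanical and reduces to checking bilinear expressions in $M,\tilde M,V,W$ against the normalizations $\langle M,\tilde M\rangle = q$, $\langle M,M\rangle = 0$, and the orthogonality $V,W \in \mathrm{span}\{M,\tilde M\}^\perp$; no geometric subtlety arises beyond what is already set up in the preceding paragraphs.
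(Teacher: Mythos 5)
Your proposal is correct and follows essentially the same route as the paper: both directions are run through the isometry $\psi:\mathfrak{R}\to P[M,q]$, reducing geodesics of a plane to lines in $\mathbb{E}^2$, with the forward direction obtained by expanding $\psi\circ Y$ for $Y(s)=Vs+W$ and the converse by noting $\langle\gamma,\vec{a}\rangle=-1$ and computing $\psi^{-1}\circ\gamma=\vec{b}\,s$ with $\tilde{M}=\vec{c}$. The only blemish is a sign slip in your intermediate identity $\langle\gamma,\vec{c}\rangle=-\tfrac12 s^2\langle\vec{a},\vec{c}\rangle$ (it should read $+\tfrac12 s^2\langle\vec{a},\vec{c}\rangle=-\tfrac12 s^2$, using $\langle\vec{a},\vec{c}\rangle=-1$), which does not affect the conclusion.
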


Using the parametrization, we can also deduce the following:
\begin{lemma}\label{lemma:congruent}
	All geodesics are congruent to each other.
\end{lemma}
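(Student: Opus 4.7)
The plan is to appeal to Theorem~\ref{thm:geodesic} to parametrize every geodesic as in \eqref{eqn:geodesicEq}, and then reduce the lemma to showing that the action \eqref{Eq:202503220510PM} of $\sltc$ on $\lfour=\hermtwo$ is transitive on the set of ordered triples $(\vec{a},\vec{b},\vec{c})\in(\lfour)^3$ satisfying the pairing relations \eqref{Eq:202501110701AM} with $\vec{a},\vec{c}\in\qthreep$. Granting such transitivity, a single $F\in\sltc$ simultaneously carries $\vec{a}_1\mapsto\vec{a}_2$, $\vec{b}_1\mapsto\vec{b}_2$, and $\vec{c}_1\mapsto\vec{c}_2$, and therefore sends the geodesic $\gamma_1$ to the geodesic $\gamma_2$ term by term in \eqref{eqn:geodesicEq}, proving congruence. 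Note that $\vec{a},\vec{c}\in\qthreep$ automatically: $\vec{c}=\gamma(0)\in\qthreep$, and $\vec{a}=\lim_{s\to\infty}2s^{-2}\gamma(s)$ must lie in $\qthreep$ since $\gamma(s)\in\qthreep$ for all $s$.

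To establish the transitivity, I would first compute the Gram matrix of $\mathrm{span}\{\vec{a},\vec{b},\vec{c}\}$ from \eqref{Eq:202501110701AM}; a direct diagonalization shows its signature is $(+,+,-)$, so its orthogonal complement in $\lfour$ is a one-dimensional spacelike line. Choosing a unit spacelike $\vec{d}$ in this complement (with sign pinned down by an orientation convention) extends the triple to a basis $(\vec{a},\vec{b},\vec{c},\vec{d})$ of $\lfour$ whose Gram matrix is fully determined by \eqref{Eq:202501110701AM} and is independent of the specific triple.

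Finally, since \eqref{Eq:202503220510PM} exhibits $\sltc$ as a double cover of the identity component $SO^+(3,1)$ of the Lorentz group, and $SO^+(3,1)$ acts simply transitively on oriented, orthochronous bases of $\lfour$ with a prescribed Gram matrix, the desired $F$ exists. The main obstacle will be justifying this transitivity claim cleanly within the paper's setup. If an elementary construction is preferred, one can build $F$ in stages: first use $\sltc$-transitivity on future-directed null vectors to move $\vec{c}$ to a canonical element of $\qthreep$ such as $\bigl(\begin{smallmatrix}1&0\\0&0\end{smallmatrix}\bigr)$; next, exploit the stabilizer of that vector (isomorphic to a product of a scaling and a Euclidean plane) to normalize the dually paired $\vec{a}$; and finally use the residual $U(1)$-isotropy, together with the action from Remark~\ref{Rmk:202503160842PM}, to rotate $\vec{b}$ into a canonical direction inside the Euclidean $2$-plane orthogonal to $\mathrm{span}\{\vec{a},\vec{c}\}$.
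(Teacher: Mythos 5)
Your proposal is correct, but your primary route differs from the paper's. The paper argues concretely inside the $2\times 2$ Hermitian model: it first uses transitivity of the $\sltc$-action on $\qthreep$ to normalize $\vec{c}=\left(\begin{smallmatrix}0&0\\0&1\end{smallmatrix}\right)$, then solves the relations \eqref{Eq:202501110701AM} to see that $\vec{a}$ and $\vec{b}$ must have an explicit two-parameter form (parameters $w\in\mathbb{C}$, $\theta\in\mathbb{R}$), and finally exhibits a single explicit $F\in\sltc$ fixing $\vec{c}$ and carrying $\vec{a},\vec{b}$ to canonical matrices; this is essentially the staged construction you sketch as a fallback at the end. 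Your main argument instead completes the triple $(\vec{a},\vec{b},\vec{c})$ (whose Gram matrix has signature $(+,+,-)$, so the complement is a spacelike line) to a Lorentz frame with fixed Gram matrix and invokes transitivity of $SO^+(3,1)\cong\sltc/\{\pm I\}$ on such frames. This Witt-type argument is shorter conceptually and generalizes readily, but it leans on facts the paper does not spell out: you must check that the unique linear isometry matching the two frames is orthochronous, which does follow here because it sends the future-directed null vector $\vec{c}_1\in\qthreep$ to $\vec{c}_2\in\qthreep$ (and orientation is arranged by the sign of $\vec{d}$), whereas the paper's explicit $F$ makes this automatic. Two small points to tighten: your limit argument only places $\vec{a}$ in the closure of $\qthreep$, so you should add that $\vec{a}\neq 0$ (e.g.\ from $\langle\vec{a},\vec{c}\rangle=-1$) to conclude $\vec{a}\in\qthreep$; and "simply transitive on oriented, orthochronous bases with prescribed Gram matrix" should be phrased via the time-orientation determined by the future-directed vectors in the triple, since the basis is not orthonormal. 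With those clarifications your proof is complete.
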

\begin{proof}
	Let $\vec{a}, \vec{b}, \vec{c}$ be arbitrary vectors of $\lfour$ which satisfy \eqref{Eq:202501110701AM}.
	By applying an isometry of $\qthreep$, we may assume that  $\vec{c}=\begin{pmatrix} 0& 0 \\ 0 & 1 \end{pmatrix}$. 
	Then \eqref{Eq:202501110701AM} implies that
		\[
%			\label{Eq:202503170900PM}
			\vec{a} = \begin{pmatrix} 2 & w \\ \bar{w} & \frac{1}{2}w\bar{w} \end{pmatrix}, \quad
			\vec{b} = \begin{pmatrix} 0 & e^{i\theta} \\ e^{-i\theta} & \frac{1}{2}(w e^{-i\theta}+ \bar{w}e^{i\theta}) \end{pmatrix}
		\]
	for some $w \in \mathbb{C}$ and $\theta \in \mathbb{R}$.
	Let $F := \begin{pmatrix} e^{-i\theta/2} & 0 \\ -\frac{1}{2} \bar{w} e^{i\theta/2} & e^{i\theta/2} \end{pmatrix} \in \sltc$, then
		\[
%		\label{Eq:202503170439PM}
			F \vec{a} F^\star = \begin{pmatrix} 2 & 0  \\ 0 & 0 \end{pmatrix}, \quad
			F \vec{b} F^\star = \begin{pmatrix} 0 & 1  \\ 1 & 0 \end{pmatrix}, \quad
			F \vec{c} F^\star = \begin{pmatrix} 0 & 0  \\ 0 & 1 \end{pmatrix},
		\]
	and thus the claim follows.
\end{proof}

\subsection{Geodesics as space curves}
To make connection with the curve theory reviewed in Section~\ref{sect:curve}, let $\gamma$ be a geodesic given by the unit speed parametrization as in Theorem~\ref{thm:geodesic}, so that
	\[
		\gamma'(s) = \vec{a} s + \vec{b}, \quad \gamma''(s) = \vec{a}.
	\]
Thus the cone curvature of $\gamma$ vanishes:
	\[
		\kappa = -\frac{1}{2} \langle \gamma'', \gamma'' \rangle = 0.
	\]
On the other hand, since
	\[
		\mathrm{span}\{ \gamma, \T, \N\} = \mathrm{span}\{ \gamma, \gamma', \gamma''\} = \mathrm{span}\{ \vec{a}, \vec{b}, \vec{c}\}
	\]
is constant in $s$, we have that $\B: I \to \mathrm{span}\{ \gamma, \T, \N\}^\perp$ is a constant vector, and hence the cone torsion $\tau$ also vanishes.

To consider the converse, let us assume that $\gamma : I \to \qthreep$ is a unit speed curve with $\kappa = \tau = 0$.
Then the Frenet equation \eqref{Eq:202503190222PM} implies $\gamma''' = T'' =  - N' = \vec{0}$.
Hence, $\gamma''$ is a constant vector, say $\vec{a} \in \lfour$.
Since $\kappa=0$ if and only if $\gamma''$ is lightlike, we have $\langle \vec{a}, \vec{a} \rangle = 0$.

Integrating once with respect to $s$, we obtain $\gamma'(s) = \vec{a} s + \vec{b}$ for some $\vec{b} \in \lfour$, and since $\gamma$ is unit speed,
	\[
		1 = \langle \gamma', \gamma' \rangle = \langle \vec{a} s + \vec{b}, \vec{a} s + \vec{b} \rangle = 2 \langle \vec{a}, \vec{b} \rangle s + \langle \vec{b}, \vec{b} \rangle,
	\]
allowing us to conclude that
$\langle \vec{a}, \vec{b} \rangle =0, \langle  \vec{b},  \vec{b} \rangle =1$.

Integrating once more with respect to $s$, we may write
	\[
		\gamma(s) = \frac{1}{2}\vec{a} s^2 + \vec{b} s + \vec{c}
	\]
for some $\vec{c} \in \lfour$.
The fact that $\gamma$ takes values in $\qthreep$ tells us
	\[
		0 = \langle \gamma(s), \gamma(s) \rangle = \langle \vec{a}, \vec{c} \rangle s^2 + s^2 +  2 \langle \vec{b}, \vec{c} \rangle s + \langle \vec{c}, \vec{c} \rangle,
	\]
so that $\langle \vec{a}, \vec{c} \rangle = -1$, $\langle \vec{b}, \vec{c} \rangle =0$, and $\langle \vec{c}, \vec{c} \rangle =0$.

Thus we conclude:
\begin{theorem}
	A unit speed curve is a geodesic if and only if its cone curvature and cone torsion vanishes.
\end{theorem}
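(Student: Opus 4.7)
The plan is to prove both implications by leveraging the explicit parametrization $\gamma(s) = \tfrac{1}{2}\vec{a}s^2 + \vec{b}s + \vec{c}$ from Theorem~\ref{thm:geodesic}, together with the Frenet equations \eqref{Eq:202503190222PM}.

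For the forward direction, I would assume $\gamma$ is a geodesic and use Theorem~\ref{thm:geodesic} to write $\gamma(s) = \tfrac{1}{2}\vec{a}s^2 + \vec{b}s + \vec{c}$, satisfying \eqref{Eq:202501110701AM}. Then $\gamma'' = \vec{a}$, and since $\langle \vec{a}, \vec{a}\rangle = 0$ the cone curvature $\kappa = -\tfrac{1}{2}\langle \gamma'', \gamma''\rangle$ vanishes. For the cone torsion, observe that $\mathrm{span}\{\gamma, \T, \N\} = \mathrm{span}\{\vec{a}, \vec{b}, \vec{c}\}$ is an $s$-independent subspace of $\lfour$, so the binormal $\B$, characterized as the unique null vector orthogonal to this span with $\det(\gamma, \T, \N, \B) < 0$, must be a constant vector field along $\gamma$. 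In particular $\B' = 0$, and inspection of the Frenet equations \eqref{Eq:202503190222PM} then yields $\tau = 0$.

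For the converse, I would assume $\kappa = \tau = 0$ and feed this directly into \eqref{Eq:202503190222PM}. This gives $\T' = -\N$ and $\N' = 0$, so $\gamma''' = \T'' = -\N' = 0$, and hence $\gamma''$ is a constant vector $\vec{a} \in \lfour$. Since $\kappa = 0$ forces $\langle \vec{a}, \vec{a}\rangle = 0$, the vector $\vec{a}$ is lightlike. Integrating once gives $\gamma'(s) = \vec{a}s + \vec{b}$ for some $\vec{b} \in \lfour$, and imposing the unit-speed condition pointwise in $s$ yields $\langle \vec{a}, \vec{b}\rangle = 0$ and $\langle \vec{b}, \vec{b}\rangle = 1$. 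A second integration gives $\gamma(s) = \tfrac{1}{2}\vec{a}s^2 + \vec{b}s + \vec{c}$, and imposing $\langle \gamma, \gamma\rangle = 0$ pointwise extracts the remaining relations $\langle \vec{a}, \vec{c}\rangle = -1$, $\langle \vec{b}, \vec{c}\rangle = 0$, $\langle \vec{c}, \vec{c}\rangle = 0$. Thus all conditions in \eqref{Eq:202501110701AM} are met, and Theorem~\ref{thm:geodesic} lets us conclude $\gamma$ is a geodesic.

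The routine part of the argument is the polynomial coefficient-matching in $s$ after integration. The only subtle step, and the one I would highlight carefully, is the forward direction's claim that $\B$ is a constant vector: one must justify that the span $\mathrm{span}\{\vec{a}, \vec{b}, \vec{c}\}$ is three-dimensional (which follows from the nondegenerate pairing table \eqref{Eq:202501110701AM}) so that $\B$ is uniquely determined pointwise and, being orthogonal to a constant three-plane with a fixed orientation convention, does not vary with $s$. Everything else is direct bookkeeping against the Frenet frame.
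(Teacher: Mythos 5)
Your proposal is correct and follows essentially the same route as the paper: the forward direction via the parametrization of Theorem~\ref{thm:geodesic} and the constancy of $\Span\{\gamma,\T,\N\}=\Span\{\vec{a},\vec{b},\vec{c}\}$ forcing $\B$ to be constant, and the converse by feeding $\kappa=\tau=0$ into \eqref{Eq:202503190222PM}, integrating twice, and matching coefficients against \eqref{Eq:202501110701AM}. One terminological slip: $\B$ is not null but unit spacelike ($\langle\B,\B\rangle=1$; the term ``null basis'' refers to the frame as a whole), so it should be characterized as the unit spacelike vector in the one-dimensional orthogonal complement of the constant three-plane with the orientation convention $\det(\gamma,\T,\N,\B)<0$ --- this does not affect the validity of your argument.
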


\begin{remark}
In Riemannian manifolds, a geodesic is determined by its initial position and initial velocity.
However, that is not the case in $\qthreep$.
For example, let $\gamma$ be given as in \eqref{eqn:geodesicEq} and take $\vec{a}$, $\vec{b}$, and $\vec{c}$ as in the proof of Lemma~\ref{lemma:congruent} with $\theta=0$.
Then any arbitrary choice of $w=i\mu \in i\mathbb{R}$ gives geodesics all with the same initial position and initial velocity.
This is due to the freedom of choice of $\tilde{M}$.
\end{remark}

\section{Two examples of ruled ZMC surfaces $\qthreep$}\label{Subsec:202503230110PM}
In this section, we will examine two important ruled ZMC surfaces in $\qthreep$, with an eye on obtaining a complete classification of all ruled ZMC surfaces in $\qthreep$.
\subsection{Screw motions in $\qthreep$}\label{sect:screwMotion}
One of the most important example of a ruled minimal surface in Euclidean space is a helicoid, obtained by applying a certain screw motion to a geodesic.
Our first step in the classification for ruled ZMC surface in $\qthreep$ is to mimic the Euclidean case, and apply a certain screw motion to a geodesic, and see if the resulting surface has zero mean curvature.
Thus, we first devote our attention to screw motions in $\qthreep$, by first examining isometries that form a one-parameter subgroup under composition:
\begin{fact}
	Let $\varphi : (-\epsilon, \epsilon) \to \sltc$ be a one-parameter subgroup of isometries of $\qthreep$, so that 
		\[
			\varphi(s+t) = \varphi(s) \varphi(t).
		\]
	Hence $\varphi(s)^{-1} ( \varphi(s+t) - \varphi(s)) = \varphi(t) - \varphi(0)$, which implies
		\[
			\varphi(s)^{-1} \varphi(s)' = \varphi(0)' \in \lasltc. 
		\]
	Then $\varphi(s) = e^{s \varphi(0)'}$.
	For $a,b,c \in \mathbb{C}$ with $\Delta := \sqrt{-(a^2+bc)} \not= 0$,
		\[
			\exp\left( s \begin{pmatrix} a & b \\ c & -a \end{pmatrix} \right)
			= 
			\begin{pmatrix} 
				\cos{\Delta s} + \frac{a}{\Delta} \sin{\Delta s} & \frac{b}{\Delta} \sin{\Delta s}  \\
				\frac{c}{\Delta} \sin{\Delta s}  &  \cos{\Delta s} - \frac{a}{\Delta} \sin{\Delta s}
			\end{pmatrix}.
		\]
\end{fact}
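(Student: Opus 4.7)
The statement splits naturally into two independent parts: first, that every smooth one-parameter subgroup of $\sltc$ is of the form $s \mapsto e^{sA}$ for some $A \in \lasltc$; second, the explicit formula for $\exp(sM)$ when $M$ is a nonzero trace-free $2\times 2$ matrix with $\det M \ne 0$.

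For the first part, the plan is a standard Lie-theoretic computation that the author has essentially sketched. Starting from the homomorphism property $\varphi(s+t) = \varphi(s)\varphi(t)$, one differentiates in $t$ at $t=0$ to obtain the ODE
\[
  \varphi'(s) = \varphi(s)\,\varphi'(0),\qquad \varphi(0) = I,
\]
after using $\varphi(s)^{-1}\varphi(s)' = \varphi(0)'$. Uniqueness of solutions to this matrix ODE then forces $\varphi(s) = \exp(s\,\varphi'(0))$. To see that $A := \varphi'(0)$ lies in $\lasltc$, I would differentiate the identity $\det \varphi(s) \equiv 1$ at $s=0$ using Jacobi's formula $\frac{d}{ds}\det \varphi(s) = \det \varphi(s)\cdot \trace(\varphi(s)^{-1}\varphi'(s))$; at $s=0$ this gives $\trace A = 0$.

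For the second part, the key identity is Cayley--Hamilton. Setting $M = \begin{pmatrix} a & b \\ c & -a \end{pmatrix}$, its characteristic polynomial is $\lambda^2 - (a^2+bc) = \lambda^2 + \Delta^2$, so
\[
  M^2 = -\Delta^2\, I.
\]
This collapses the power series of $e^{sM}$ into two subseries:
\[
  e^{sM} = \sum_{n=0}^\infty \frac{(sM)^{2n}}{(2n)!} + \sum_{n=0}^\infty \frac{(sM)^{2n+1}}{(2n+1)!} = \cos(\Delta s)\,I + \frac{\sin(\Delta s)}{\Delta}\,M,
\]
using $M^{2n} = (-\Delta^2)^n I$ and $M^{2n+1} = (-\Delta^2)^n M$. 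Substituting the entries of $M$ then yields the displayed matrix. The hypothesis $\Delta \ne 0$ (i.e.\ $M$ is not nilpotent) is exactly what is needed to divide by $\Delta$ in the odd series and to recognize the trigonometric sums.

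There is no substantive obstacle here: both steps are textbook, and the main task is simply to record them cleanly. I would keep the proof short, presenting the Lie-theoretic derivation in one paragraph and the Cayley--Hamilton computation in a second.
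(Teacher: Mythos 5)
Your proposal is correct and follows essentially the same route as the paper, which sketches the same argument inline: differentiate the homomorphism property to get $\varphi(s)^{-1}\varphi(s)'=\varphi(0)'$, solve the resulting linear ODE to obtain $\varphi(s)=e^{s\varphi(0)'}$, and evaluate the exponential of a trace-free matrix by the standard even/odd series splitting using $M^2=-\Delta^2 I$. You merely fill in details the paper leaves implicit (Jacobi's formula for $\trace A=0$ and the Cayley--Hamilton computation), which is fine.
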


So let us consider all cases of $A \in \lasltc$, so that $A^2 = -(\det{A}) I_2$, generating such one-parameter subgroups, where the rotation matrices of Example~\ref{exam:rotation} will be used.

\textbf{Case 1.} Suppose that $\det{A} \not= 0$.
Then $\lambda :=  \sqrt{-\det{A}}$ is an eigenvalue of $A$ and
	\[
		A = M \, \operatorname{diag}(\lambda, -\lambda) \, M^{-1}
	\]
for some $M \in \sltc$.
Then the one-parameter subgroup $s \mapsto e^{sA}$ is similar to $s \mapsto D_1(e^{\lambda s})$.
If $\lambda$ is purely imaginary, $\varphi$ are elliptic rotations. 
If $\lambda$ is real, $\varphi$ are hyperbolic rotations. 
If $\lambda$ is neither real nor purely imaginary, then $\varphi$ are the \emph{screw motions}.

\textbf{Case 2.} Suppose that $\det{A}=0$.
Then $A = \begin{pmatrix} \alpha\beta & -\alpha^2 \\ \beta^2 & -\alpha\beta \end{pmatrix}$ for some $\alpha, \beta \in \mathbb{C}$ and $e^{sA} = I + s A$.
Let
	\[
		\lambda := \sqrt{-i\alpha / \beta},\qquad h := 2i\alpha\beta \qquad \text{for }\ \alpha, \beta \in \mathbb{C}\ \backslash\ \{0\}.
	\]
Then
	\[
		D_2(\lambda) e^{sA} D_2(\lambda)^{-1} = \varphi(h s)
	\]
where
	\[
		\varphi(k) := \begin{pmatrix} 1+ i\frac{k}{2} & \frac{k}{2} \\ \frac{k}{2} & 1-i\frac{k}{2} \end{pmatrix}
		= B P(k) B^{-1},
		 \quad
		 B := \frac{1}{\sqrt{2}}\begin{pmatrix} 1 & -i \\ -i & 1\end{pmatrix}.
	\]
So the one-parameter subgroup $s \mapsto e^{sA}$ is similar to $s \mapsto P(sh)$ in $\sltc$ for some $h \in \mathbb{C}$. 
If $\alpha$ or $\beta$ is 0, it's trivial. Note that, if $h=r e^{i\theta}$, then 
	\[
		\begin{pmatrix} e^{-i\theta/2} & 0 \\ 0 & e^{i\theta/2} \end{pmatrix}
		\begin{pmatrix} 1 & s r e^{i\theta} \\ 0 & 1 \end{pmatrix}
		\begin{pmatrix} e^{-i\theta/2} & 0 \\ 0 & e^{i\theta/2} \end{pmatrix}^{-1}
		=
		\begin{pmatrix} 1 & r s \\ 0 & 1 \end{pmatrix}, 
	\]
so it is conjugate to a single parabolic rotation.
In conclusion, we obtain the following:
\begin{proposition}
	Any one-parameter subgroup of isometries of $\qthreep$  is similar to
		\[
			D_1(e^{\lambda s}) = \begin{pmatrix} e^{\lambda s} & 0 \\ 0 & e^{-\lambda s} \end{pmatrix},
			\quad\text{or}\quad
			P(rs) = \begin{pmatrix} 1 & r s \\ 0 & 1 \end{pmatrix},
		\]
	for some $\lambda \in \mathbb{C}$  and $r \in \mathbb{R}$.
\end{proposition}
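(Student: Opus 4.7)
The plan is to observe that the preceding analysis—organized by whether $A := \varphi'(0) \in \lasltc$ has zero determinant—already exhausts every one-parameter subgroup up to conjugation in $\sltc$, and then to perform one remaining phase-absorption to obtain a real parameter in the parabolic case. By the Fact recorded above, any one-parameter subgroup is of the form $\varphi(s) = e^{sA}$ for some $A \in \lasltc$, and tracelessness combined with Cayley--Hamilton gives $A^2 = -(\det A)\, I_2$. Hence every such $A$ is either invertible or nilpotent, which is an exhaustive dichotomy.

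In the invertible case $\det A \neq 0$, I would set $\lambda := \sqrt{-\det A}$, so $\pm \lambda$ are the two distinct eigenvalues of $A$, and diagonalize $A = M\,\mathrm{diag}(\lambda, -\lambda)\,M^{-1}$ for some $M \in \sltc$. Exponentiating gives $e^{sA} = M\, D_1(e^{\lambda s})\, M^{-1}$, which is the asserted similarity to $D_1(e^{\lambda s})$. In the nilpotent case $\det A = 0$, tracelessness together with $A^2 = 0$ forces the parametric form $A = \begin{pmatrix} \alpha\beta & -\alpha^2 \\ \beta^2 & -\alpha\beta \end{pmatrix}$ for some $\alpha, \beta \in \mathbb{C}$, and hence $e^{sA} = I_2 + sA$. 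The computation carried out in the preceding paragraphs shows that conjugation by $B^{-1}D_2(\lambda)$ with $\lambda = \sqrt{-i\alpha/\beta}$ sends $e^{sA}$ to $P(hs)$ for $h := 2i\alpha\beta \in \mathbb{C}$; a further conjugation by $\mathrm{diag}(e^{-i\theta/2}, e^{i\theta/2}) \in \sltc$ with $\theta = \arg h$ absorbs the phase and reduces this to $P(rs)$ for $r = |h| \in \mathbb{R}$, exactly as displayed in the final calculation preceding the statement.

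The main obstacle is essentially bookkeeping rather than computation. I would need to dispatch the degenerate boundary cases $\alpha = 0$ or $\beta = 0$ of the nilpotent case, in which $A$ is already a scalar multiple of $\begin{pmatrix} 0 & 1 \\ 0 & 0 \end{pmatrix}$ or $\begin{pmatrix} 0 & 0 \\ 1 & 0 \end{pmatrix}$, the latter being brought to the former by conjugation with $D_2(1) \in \sltc$. I would also verify that every conjugating matrix used above lies in $\sltc$, so that conjugation in $\sltc$ corresponds via \eqref{Eq:202503220510PM} to isometric equivalence of the induced actions on $\qthreep$. With these points in place, the two families $D_1(e^{\lambda s})$ and $P(rs)$ cover every one-parameter subgroup up to isometry, which is the content of the proposition.
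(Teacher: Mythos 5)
Your proposal is correct and follows essentially the same route as the paper: the dichotomy $\det A \neq 0$ versus $\det A = 0$ for $A = \varphi'(0) \in \lasltc$, diagonalization to $D_1(e^{\lambda s})$ in the first case, and in the nilpotent case conjugation by $B^{-1}D_2(\lambda)$ to reach $P(hs)$ followed by the phase-absorbing conjugation by $\operatorname{diag}(e^{-i\theta/2}, e^{i\theta/2})$ to get $P(rs)$ with $r = |h|$. Your explicit treatment of the degenerate subcases $\alpha = 0$ or $\beta = 0$ (which the paper dismisses as trivial) and the check that all conjugating matrices lie in $\sltc$ are welcome additions but do not change the argument.
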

Now that we have the notions of geodesics and screw motions, we define the following special classes of surfaces:
\begin{definition}
	Let $X : \mathcal{U} \to \qthreep$ be an immersion.
	\begin{itemize}
		\item If $X$ is invariant under screw motions, then $X$ is called a \emph{helicoidal surface}.
		\item If $X$ is foliated by geodesics, then $X$ is called a \emph{ruled surface}.
	\end{itemize}
\end{definition}

\subsection{Helicoids}\label{SubSec:202503231257PM}
Now let us consider the surface obtained by applying a screw motion to a geodesic:
	\begin{align*}
		X(u,v) &:= 
			\begin{pmatrix} e^{(a + ib)v} & 0 \\ 0 & e^{-(a + ib)v} \end{pmatrix}	 
			\begin{pmatrix} u^2 & u \\ u & 1 \end{pmatrix} 
			\begin{pmatrix} e^{(a + ib)v} & 0 \\ 0 & e^{-(a + ib)v} \end{pmatrix}^\star\\	
				&= \begin{pmatrix} e^{2av} u^2 & e^{2ibv} u \\ e^{-2ibv} u & e^{-2av} \end{pmatrix}.
	\end{align*}
Then one can calculate the lightlike Gauss map $\G$ and the corresponding fundamental forms to find that $\mathrm{H} \equiv 0$.
Thus, as in the case of minimal surfaces in Euclidean space, we define helicoids in $\qthreep$ as follows:
\begin{definition}
	For arbitrary real numbers $a,b$ with $b\not=0$, let $X$ be a ZMC surface defined by
		\[
			H^{a,b}(u,v)
			:= 
			\begin{pmatrix} e^{(a + ib)v} & 0 \\ 0 & e^{-(a + ib)v} \end{pmatrix}	 
				\begin{pmatrix} u^2 & u \\ u & 1 \end{pmatrix} 
				\begin{pmatrix} e^{(a + ib)v} & 0 \\ 0 & e^{-(a + ib)v} \end{pmatrix}^\star.		
		\]
	We call $X$ the \emph{standard helicoid} of $\qthreep$.
	Any surface that is congruent to $X$ up to isometries of $\qthreep$ are referred to as \emph{helicoids} (see left of Figure~\ref{fig:ruled1}).
\end{definition}
Now we note some important geometric facts about helicoids in $\qthreep$:
\begin{lemma}\label{lemma:helicoid}
	The following hold:
	\begin{itemize}
		\item	For any real numbers $a,b$ with $b\not=0$, the curve $v \mapsto H^{a,b}(\tfrac{1}{2\sqrt{a^2+b^2}},v)$ is a unit-speed helix and has constant cone curvature $\kappa = 2(a^2-b^2)$  and constant cone torsion $\tau = 4ab$.
		\item	The metric is given by $\mathbf{g} = \operatorname{d}\!u^2 + 4au \operatorname{d}\!u\operatorname{d}\!v + 4(a^2+b^2) u^2 \operatorname{d}\!v^2$, so $u$-parameter curves and $v$-parameter curves of $H^{a,b}$ make angles of constant magnitude not equal to $\tfrac{\pi}{2}$.
		\item	The metric is singular if $u=0$.
	\end{itemize}
\end{lemma}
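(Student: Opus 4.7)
The plan is to verify each assertion by direct computation from the explicit matrix parametrization of $H^{a,b}$, using the identification $\langle X, X\rangle = -\det X$ on $\hermtwo$ together with polarization for the off-diagonal pairing. First I would compute $H_u$ and $H_v$ from the definition and evaluate the three entries of the first fundamental form: $g_{11} = -\det H_u = 1$, $g_{22} = -\det H_v = 4(a^2+b^2)u^2$, and $g_{12} = 2au$ via polarization. This establishes the stated form of $\mathbf{g}$. The angle claim then follows since $\cos\theta = g_{12}/\sqrt{g_{11}g_{22}} = a\,\mathrm{sgn}(u)/\sqrt{a^2+b^2}$ is constant in magnitude (and differs from $\pi/2$ precisely when $a\neq 0$), while the singularity at $u = 0$ is immediate since $g_{22}$ and $g_{12}$ both vanish there, reducing $\mathbf{g}$ to the degenerate form $\mathrm{d}u^2$.

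For the helix assertion, set $u_0 := \tfrac{1}{2\sqrt{a^2+b^2}}$. Unit-speed is immediate from $\langle \gamma', \gamma'\rangle = g_{22}(u_0,v) = 1$. To obtain the cone curvature I would differentiate once more to get $\gamma'' = H_{vv}|_{u = u_0}$ and apply $\kappa = -\tfrac{1}{2}\langle \gamma'', \gamma''\rangle = \tfrac{1}{2}\det\gamma''$; expanding the resulting $2\times 2$ determinant produces $\kappa = 2(a^2-b^2)$, which is constant in $v$. For the cone torsion, rather than computing $\B$ from scratch, I would differentiate $\N = \kappa\gamma - \gamma''$ and combine with the Frenet equation $\N' = -\kappa\T - \tau\B$. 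Because $\kappa$ is constant, this yields $\tau\B = \gamma''' - 2\kappa\T$; pairing both sides with themselves and using $\langle\B,\B\rangle = 1$ gives $\tau^2 = \langle\gamma''' - 2\kappa\T,\, \gamma''' - 2\kappa\T\rangle$, which after expanding via the $-\det$ formula collapses to $16a^2b^2$, so $|\tau| = 4|ab|$.

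The sign of $\tau$ is fixed by a homogeneity observation: the curve $v \mapsto H^{a,b}(u_0,v)$ is the orbit of its value at $v = 0$ under the one-parameter subgroup $s \mapsto D_1(e^{(a+ib)s})$, which is an isometry of $\qthreep$ that carries the oriented Frenet frame at $0$ to the frame at $v$. Consequently both $\kappa$ and $\tau$ are automatically constant along $\gamma$, and the sign of $\tau$ reduces to a single check at $v = 0$, where one reads off $\B$ as the unique unit spacelike vector in $\{\gamma,\T,\N\}^\perp$ satisfying the orientation normalization $\det(\gamma,\T,\N,\B) < 0$ and then applies $\B' = \tau\gamma$ (this will select $\tau = +4ab$). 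The main obstacle is purely computational bookkeeping---in particular, controlling the sign of $\tau$---and the homogeneity argument above is what cleanly reduces this to a single point evaluation rather than a global calculation.
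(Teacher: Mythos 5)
Your proposal is correct and follows essentially the same route as the paper, whose proof is simply ``direct calculations'': computing $\mathbf{g}$ entrywise via $\langle X,X\rangle=-\det X$ and polarization, getting $\kappa=-\tfrac12\langle\gamma'',\gamma''\rangle=2(a^2-b^2)$, and extracting $\tau$ from the Frenet relation $\tau\B=\gamma'''-2\kappa\T$ (which indeed yields $\tau^2=16a^2b^2$), with the equivariance under $v\mapsto D_1(e^{(a+ib)v})$ justifying constancy and reducing the sign of $\tau$ to a single-point check. The only part you leave schematic is that final sign evaluation at $v=0$, but the method you describe (orientation-normalized $\B$ plus $\B'=\tau\gamma$) settles it and is consistent with the paper's conventions.
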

\begin{proof}
	The proof follows from direct calculations.
\end{proof}

In particular, application of screw motion to geodesics does not result in orthogonal parametrization of the resulting helicoid, which makes the case of $\qthreep$ stand out from the cases of other space forms.
However, we can find a conformal reparametrization of the standard helicoid as follows:
If we let $\tilde{v} := 2bv$ and $\tilde{u} := 2av + \ln{u}$, then 
	\[
		\tilde{H}^{a,b}(\tilde{u},\tilde{v}) := H^{a,b}\left( e^{\tilde{u}-\tfrac{a}{b}\tilde{v}},\tfrac{1}{2b}\tilde{v} \right)
	\]
is conformally parametrized, and
\begin{equation}\label{Eq:202501280355AM}
	\begin{aligned}
	\tilde{H}^{a,b}(u,v) 
		&=  e^{-\frac{a}{b} v} 
			\begin{pmatrix} e^{i\frac{v}{2}} & 0 \\ 0 & e^{-i\frac{v}{2}} \end{pmatrix} 
			\begin{pmatrix} e^{2u} & e^u \\ e^u & 1 \end{pmatrix} 
			\begin{pmatrix} e^{i\frac{v}{2}} & 0 \\ 0 & e^{-i\frac{v}{2}} \end{pmatrix}^\star	\\
		&= \varphi^{\mathrm{Hel}}_{i \frac{a}{2b}}(u+iv) \varphi^{\mathrm{Hel}}_{i \frac{a}{2b}}(u+iv)^\star,  
		\qquad
		\varphi^{\mathrm{Hel}}_c(z) := e^{c z} \begin{pmatrix} e^z \\ 1 \end{pmatrix}.
	\end{aligned}
\end{equation}
Direct calculations show that the $v$-parameter curves do not have constant cone curvature, hence are not helices.

\begin{figure}
	\centering
	\begin{minipage}{0.49\linewidth}
		\centering
		\includegraphics[width=0.8\textwidth]{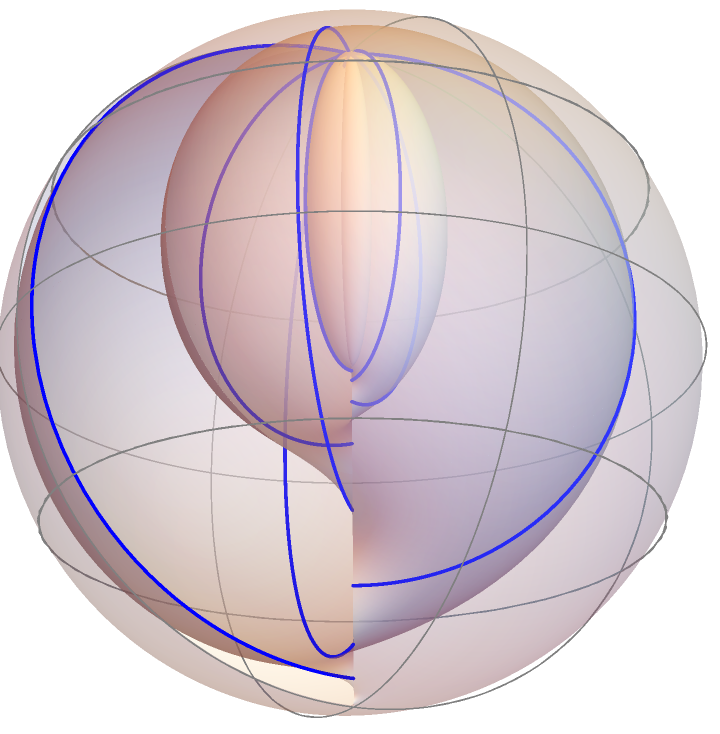}
	\end{minipage}
	\begin{minipage}{0.49\linewidth}
		\centering
		\includegraphics[width=0.8\textwidth]{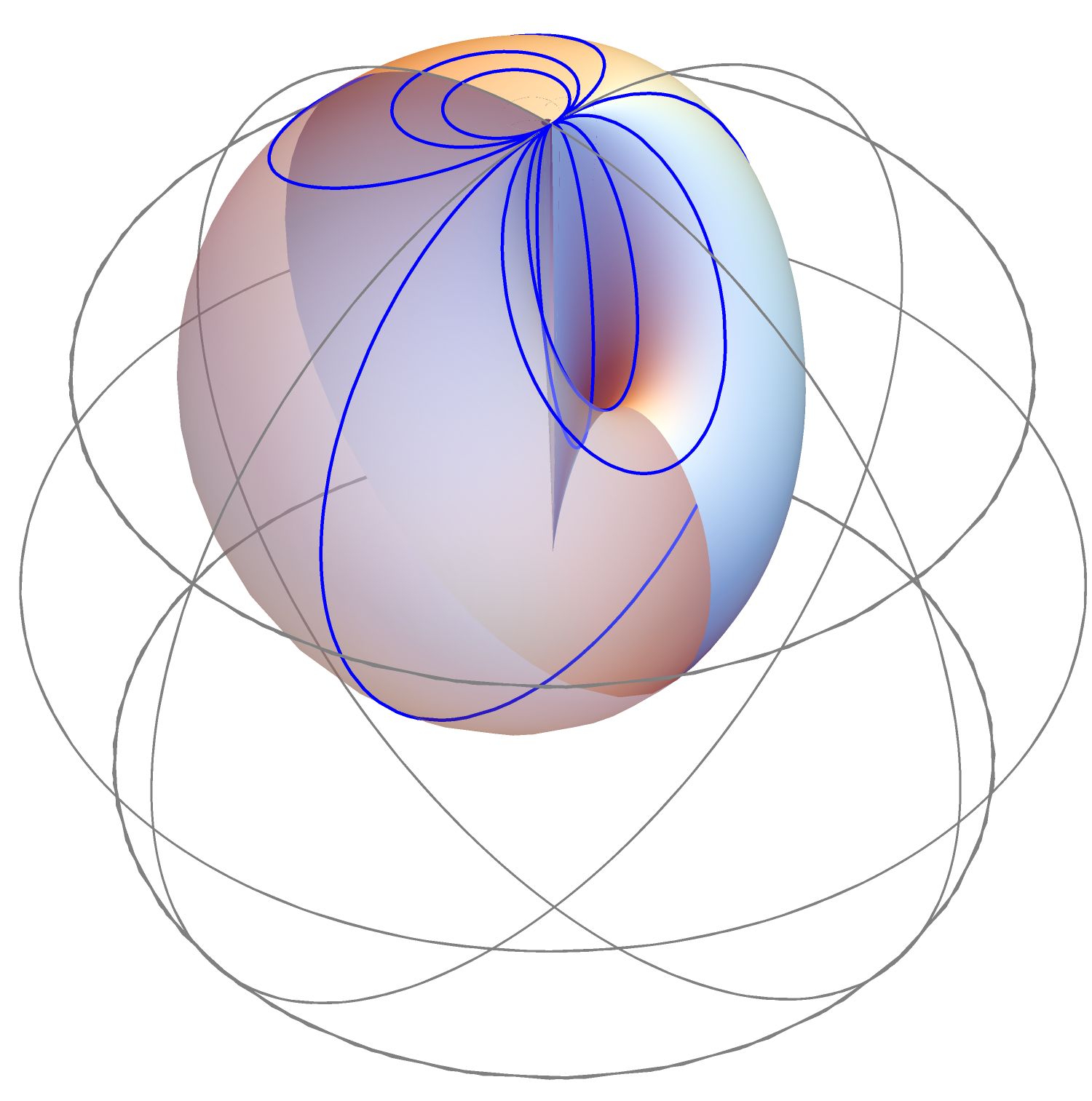}
	\end{minipage}
	\caption{A helicoid (left) and a parabolic catenoid (right) in the ball model of $\qthreep$ (cf.\ \cite{CKLLY1}). Blue curves represent geodesics.}
	\label{fig:ruled1}
\end{figure}

\subsection{Parabolic catenoids}\label{SubSec:202503231258PM}
We now give another important example that is a ruled ZMC surface in $\qthreep$: the parabolic catenoid reviewed in Section~\ref{section2.3}.
To check that parabolic catenoid is a ruled surface, we first observe that if $z=u+iv$ and $\varphi^P_c$ is from \eqref{Eq:202503170717PM}, then
	\begin{equation}\label{Eq:202503221038PM}
		C^P_c(u,v) := \varphi^P_c(u+iv)\varphi^P_c(u+iv)^\star
			=  e^{- 2 c v} 
			\begin{pmatrix} 1 & iv \\ 0 & 1 \end{pmatrix} 
			\begin{pmatrix} u^2 & u \\ u & 1 \end{pmatrix} 
			\begin{pmatrix} 1 & iv \\ 0 & 1 \end{pmatrix}^\star.
	\end{equation}
The fact that the parabolic catenoid is a ruled surface follows from the following reparametrization:
	\begin{equation}\label{Eq:202501280440}
		C^P_{c}(e^{2cv}u,v) 
			=\begin{pmatrix} 1 & iv \\ 0 & 1 \end{pmatrix}	 \begin{pmatrix} e^{ cv} & 0 \\ 0 & e^{-cv} \end{pmatrix}
			\begin{pmatrix} u^2 & u \\ u & 1 \end{pmatrix} 
			\begin{pmatrix} e^{ cv} & 0 \\ 0 & e^{-cv} \end{pmatrix}^\star
			\begin{pmatrix} 1 & iv \\ 0 & 1 \end{pmatrix}^\star	 
	\end{equation}
so that parabolic catenoid is a surface obtained by applying isometries to a geodesic.

\begin{remark}
	The map
		\[
			\mathbb{R} \ni c \mapsto p_c(v)  := \begin{pmatrix} 1 & iv \\ 0 & 1 \end{pmatrix}	 \begin{pmatrix} e^{ cv} & 0 \\ 0 & e^{-cv} \end{pmatrix}
		\]
in \eqref{Eq:202501280440} is not a one-parameter subgroup of isometries, that is,  $p_c(v_1 + v_2 )  \not= p_c(v_1) p_c(v_2)$.
Hence it is not a screw motion.
On the other hand, the map
	\[
		\mathbb{R} \ni c \mapsto q_c(v) := e^{-cv} \begin{pmatrix} 1 & iv \\ 0 & 1 \end{pmatrix} = \exp\begin{pmatrix} -cv & -iv \\ 0 & -cv \end{pmatrix} 
	\]
in \eqref{Eq:202503221038PM} induces a one-parameter group, that is, $q_c(v_1 + v_2) = q_c(v_1) q_c(v_2)$. 
However, $q_c(v)$ does not represent an isometry.
\end{remark}

\section{Helicoids and the associate family of catenoids}\label{Section4}
In the Euclidean case, helicoids and catenoids are related by an isometric deformation, called the associated family.
In this section, we examine whether helicoids in $\qthreep$ can also be found in the associated family of catenoids in $\qthreep$.

\subsection{Associated family of catenoids in $\qthreep$}
Recall from \eqref{Eq:202502051202PM} that a ZMC surface with Weierstrass data $(g, \omega)$ admits an isometric deformation known as the \emph{associated family} by a change in the Weierstrass data via
	\[
		(g, \omega) \mapsto (g, \lambda \omega)
	\]
for any unit complex constant $\lambda \in \mathbb{S}^1 \subset \mathbb{C}$.
Taking the Weierstrass data as
	\[
		(g, \omega_\delta) = \left(w, \delta \frac{\dif{w}}{w^2} \right),
	\]
for some $\delta \in \mathbb{R} \setminus \{0\}$, we note from \eqref{Eq:202502070726AM} that the resulting immersion $X_\delta$ is
	\begin{itemize}
		\item an elliptic catenoid if $\delta > -\frac{1}{4}$,
		\item a parabolic catenoid if $\delta = -\frac{1}{4}$, or
		\item a hyperbolic catenoid if $\delta < -\frac{1}{4}$.
	\end{itemize}
	
Therefore, any surface with Weierstrass data
	\[
		(g, \omega_\delta) = \left(w, \delta \frac{\dif{w}}{w^2} \right),
	\]
for any $\delta \in \mathbb{C}$ must be in the associated family of a catenoid, given by the Weierstrass data
	\[
		(g, \omega_{|\delta|}) = \left(w, |\delta| \frac{\dif{w}}{w^2} \right).
	\] 

\subsection{Helicoids and the associated family of catenoids}
Turning our attention to the conformally parametrized (standard) helicoid $\tilde{H}^{a,b}$ in \eqref{Eq:202501280355AM}, we first note that we may assume without loss of generality that $b = \frac{1}{2}$. 
Then the lift of $\tilde{H}^{a}$ to $\mathbb{C}^2$ is given in \eqref{Eq:202501280355AM} by
	\[
		\varphi^{\mathrm{Hel}}_a(z) := e^{ i a z} \begin{pmatrix} e^z \\ 1 \end{pmatrix}.
	\]
Thus we can find the lift $F^{a}$ to $\sltc$ using \eqref{Eq:202503230200PM}:
	\[
		F^{a}(z) =
			\begin{pmatrix} e^{\tfrac{z}{2}} & 0 \\ 0 & e^{-\tfrac{z}{2}} \end{pmatrix}
			\begin{pmatrix}
				1 & -\frac{(1+ia)^2}{1+2ia} \\
				 1 & -\frac{a^2}{1+2ia}
			\end{pmatrix}
			\begin{pmatrix} e^{i az} & 0 \\ 0 & e^{-i a z} \end{pmatrix}
			\begin{pmatrix} e^{\tfrac{z}{2}} & 0 \\ 0 & e^{-\tfrac{z}{2}} \end{pmatrix}.
	\]
This in turn allows us to find the Weierstrass data using \eqref{Eq:202501250902AM}, so that
	\begin{align*}
		(G, \Omega) &= \left( \frac{a-i}{a} e^z, a^2 e^{-z}\dif{z} \right) \\
		(g, \omega) &= \left(- \frac{a(1+ia)}{2a-i}  e^{(-1-2ia)z},  -e^{(1+2ia)z} \dif{z}\right).
	\end{align*}
Making a change of coordinate via $w := g$, we may normalize the Weierstrass data as
	\[
		(\tilde{g}, \tilde{\omega}) = \left(w, -\frac{a(a-i)}{(2a-i)^2}\frac{\dif{w}}{w^2} \right).
	\]
Therefore, we learn:
\begin{theorem}\label{thm:helicate}
	Every helicoid in $\qthreep$ is in the associated family of some catenoid in $\qthreep$.
\end{theorem}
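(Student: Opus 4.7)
My strategy is to reduce the theorem to a computation of Weierstrass data and then invoke the observation already established in this subsection: any ZMC surface whose Weierstrass data can be written as $(w, \delta \tfrac{\dif w}{w^2})$ for some $\delta \in \mathbb{C}$ lies in the associated family of the catenoid with data $(w, |\delta| \tfrac{\dif w}{w^2})$. Thus it suffices to exhibit such Weierstrass data for every helicoid in $\qthreep$.

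I would first normalize to a one-parameter family. The conformal reparametrization in \eqref{Eq:202501280355AM} already shows that, up to the overall scalar factor $e^{-(a/b)v}$, which can be absorbed into the lift $\varphi \mapsto c\varphi$ and thus becomes a homothety by Remark~\ref{Rmk:202503160842PM}, the conformally parametrized helicoid is determined by the single complex parameter $c = ia/(2b)$. Fixing $b = \tfrac{1}{2}$ and renaming, I would work with $\varphi^{\mathrm{Hel}}_a(z) = e^{iaz}\binom{e^z}{1}$ for $a \in \mathbb{R}$. Next I would lift $\varphi^{\mathrm{Hel}}_a$ to $F^a \in \sltc$ using \eqref{Eq:202503230200PM}, carrying out the integration defining $E$ and assembling the three matrix factors, and then read off $(g, \omega)$ from $F^{-1}\dif F$ via \eqref{Eq:202501250902AM}.

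The final step is the coordinate change $w := g(z)$. Because $g$ turns out to be a pure exponential in $z$, it is nowhere vanishing on its domain, so $w = g(z)$ is a biholomorphism onto its image and $\dif g$ is a nonzero scalar multiple of $g \, \dif z$. A brief algebraic simplification then shows that $\omega$ pulls back to $\delta \tfrac{\dif w}{w^2}$ for the explicit constant $\delta = -a(a-i)/(2a-i)^2 \in \mathbb{C}$, and the reduction from the preceding subsection completes the proof. The main difficulty is purely computational bookkeeping: the integral for $E$, the matrix products, and the differentiation each invite sign and factor-of-$i$ errors. No conceptual obstacle arises, since the exponential form of $g$ automatically forces the transformed $\omega$ to be a scalar multiple of $\dif w / w^2$.
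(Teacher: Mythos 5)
Your proposal is correct and follows essentially the same route as the paper: normalize to $b=\tfrac12$, lift $\varphi^{\mathrm{Hel}}_a$ to $\sltc$ via \eqref{Eq:202503230200PM}, read off $(g,\omega)$ from \eqref{Eq:202501250902AM}, change coordinates by $w:=g$ to get $\bigl(w, -\tfrac{a(a-i)}{(2a-i)^2}\tfrac{\dif{w}}{w^2}\bigr)$, and invoke the earlier observation that any data $(w,\delta\tfrac{\dif{w}}{w^2})$, $\delta\in\mathbb{C}$, lies in the associated family of the catenoid with data $(w,|\delta|\tfrac{\dif{w}}{w^2})$. No substantive difference from the paper's argument.
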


However, the converse is not true, namely, there are catenoids that do not have helicoids in the associated family.
To see this, we note that for
	\[
		\delta(a) := -\frac{a(a-i)}{(2a-i)^2},
	\]
one can directly check that for real functions of $\rmx$ and $\rmy$ of $a$ given by
	\[
		\rmx(a) + i \, \rmy(a) := -\frac{1}{4} - \delta(a),
	\]
we have
	\[
		\left(\rmx(a)^2+\rmy(a)^2 \right)^2 + \frac{1}{4} \rmx(a) \left( \rmx(a)^2 + \rmy(a)^2 \right) - \frac{1}{64} \rmy(a)^2 = 0,
	\]
which is the formula for a cardioid.
As $a$ varies from $-\infty$ to $\infty$, the image starts from $(-1/4,0)$ and wraps around in the counterclockwise direction.

In particular, we learn that
	\[
		|\delta(a)| \in [0,\tfrac{1}{2\sqrt{3}}],
	\]
so that when $|c| > \tfrac{1}{2\sqrt{3}}$, the catenoids with Weierstrass data $(g, \omega_c) = \left(z, c \frac{\dif{w}}{w^2} \right)$, have no helicoids in its associated family.
We summarize:
\begin{theorem}\label{thm:cateheli}
Let $X$ be a ZMC catenoid in $\mathbb{Q}^3_+$ given by the Weierstrass data
	\[
		\mathrm{g}=\zeta,\qquad \omega = \frac{\delta}{\zeta^2}\operatorname{d}\!\zeta \qquad\text{for}\quad \delta \in \mathbb{R}^+.
	\]
Then the catenoids can be classified according to the different catenoids and helicoids they admit under their associated families (see also Figure~\ref{Fig:202502060736PM}):
\begin{itemize}
	\item $0<\delta<\frac{1}{4}${\upshape:} two elliptic catenoids and two helicoids (Fig.\ \ref{Fig:cat1}),
	\item $\delta=\frac{1}{4}${\upshape:} one elliptic catenoid, two helicoids, and one parabolic catenoid (Fig.\ \ref{Fig:cat2}),
	\item $\frac{1}{4}<\delta<\frac{1}{2\sqrt3}${\upshape:} one elliptic catenoid, one hyperbolic catenoid, four helicoids (Fig.\ \ref{Fig:cat3}),
	\item $\delta=\frac{1}{2\sqrt3}${\upshape:} one elliptic catenoid, one hyperbolic catenoid, two helicoids (Fig.\ \ref{Fig:cat4}),
	\item $\frac{1}{2\sqrt3}<\delta${\upshape:} one elliptic catenoid, one hyperbolic catenoid.
\end{itemize}
\end{theorem}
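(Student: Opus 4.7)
The plan is to enumerate, for each value of $\delta > 0$, the members of the associated family that are catenoids or helicoids by examining the parameter $c = \lambda\delta$ on the circle $|c| = \delta$ in $\mathbb{C}$. By \eqref{Eq:202502051202PM}, the associated family of the given catenoid consists exactly of the ZMC surfaces with Weierstrass data $(w, c/w^2 \, \mathrm{d}w)$ with $|c| = \delta$. By Section~\ref{section2.3}, such a surface is a catenoid precisely when $c \in \mathbb{R}$, and it is elliptic, parabolic, or hyperbolic according to whether $c > -\tfrac{1}{4}$, $c = -\tfrac{1}{4}$, or $c < -\tfrac{1}{4}$. By the normalization carried out in the proof of Theorem~\ref{thm:helicate}, the surface is a helicoid precisely when $c = \delta(a) := -a(a-i)/(2a-i)^2$ for some $a \in \mathbb{R} \setminus \{0\}$.

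The catenoid count is immediate: the circle $|c| = \delta$ meets $\mathbb{R}$ only at $c = \pm\delta$. Since $\delta > 0$, the value $c = \delta$ always yields an elliptic catenoid, while $c = -\delta$ yields an elliptic, parabolic, or hyperbolic catenoid according to whether $\delta < \tfrac{1}{4}$, $\delta = \tfrac{1}{4}$, or $\delta > \tfrac{1}{4}$. This accounts for the catenoid portion of each of the five cases in the statement.

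For the helicoid count, I will analyze
\[
    |\delta(a)|^2 = \frac{a^2(a^2+1)}{(4a^2+1)^2}.
\]
Setting $t = a^2 \geq 0$ and $h(t) := t(t+1)/(4t+1)^2$, a direct computation gives $h'(t) = (1 - 2t)/(4t+1)^3$, so $h$ strictly increases on $[0, 1/2]$ from $0$ to $h(1/2) = 1/12$, then strictly decreases on $[1/2, \infty)$ toward $\lim_{t \to \infty} h(t) = 1/16$. Hence the number of positive roots of $h(t) = \delta^2$ is $1$ for $0 < \delta^2 \leq 1/16$, $2$ for $1/16 < \delta^2 < 1/12$, $1$ for $\delta^2 = 1/12$, and $0$ for $\delta^2 > 1/12$. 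Each positive root $t$ produces two values $a = \pm\sqrt{t}$, which give two distinct helicoids because $\delta(-a) = \overline{\delta(a)}$ has nonzero imaginary part $-a/(4a^2+1)^2$ when $a \neq 0$. Doubling the above counts and matching to the ranges $\delta < \tfrac{1}{4}$, $\delta = \tfrac{1}{4}$, $\tfrac{1}{4} < \delta < \tfrac{1}{2\sqrt{3}}$, $\delta = \tfrac{1}{2\sqrt{3}}$, $\delta > \tfrac{1}{2\sqrt{3}}$ reproduces the stated helicoid counts.

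The main obstacle is justifying that two distinct positive roots $t_1 \neq t_2$ (in the case $1/16 < \delta^2 < 1/12$) give four genuinely distinct helicoids rather than repetitions. For this I will appeal to the embeddedness of the cardioid parametrization $a \mapsto \delta(a)$ noted immediately before the theorem statement: since the image curve is a simple closed curve (with a single cusp at $0$ corresponding to $a = 0$), the map $a \mapsto \delta(a)$ is injective on $\mathbb{R} \setminus \{0\}$, so the four values $\delta(\pm\sqrt{t_1}), \delta(\pm\sqrt{t_2})$ are pairwise distinct. Finally, assembling the catenoid and helicoid counts in each of the five ranges of $\delta$ yields the full classification in the theorem.
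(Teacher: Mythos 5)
Your proposal is correct and follows essentially the same strategy as the paper: identify the associated family with the circle $|c|=\delta$ in the Weierstrass-data parameter $c$, read off the catenoids at the two real points $c=\pm\delta$ (classified by their position relative to $-\tfrac14$), and read off the helicoids at the points where this circle meets the curve $a\mapsto\delta(a)=-\tfrac{a(a-i)}{(2a-i)^2}$ coming from Theorem~\ref{thm:helicate}. The one place you diverge is the counting of circle--helicoid intersections: the paper does this by describing the image of $\delta(a)$ as a cardioid with $|\delta(a)|\in[0,\tfrac{1}{2\sqrt3}]$ and reading the multiplicities off Figure~\ref{Fig:202502060736PM}, whereas you count analytically via the monotonicity of $h(t)=\tfrac{t(t+1)}{(4t+1)^2}$ (maximum $\tfrac1{12}$ at $t=\tfrac12$, limit $\tfrac1{16}$ at infinity), which is a more explicit, self-contained version of the same step and correctly produces the thresholds $\delta=\tfrac14$ and $\delta=\tfrac{1}{2\sqrt3}$. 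Two small repairs are needed. First, your justification that $\delta(\pm\sqrt{t_1}),\delta(\pm\sqrt{t_2})$ are four distinct values is too weak as stated: the image being a simple closed curve does not by itself make the parametrization injective. A direct argument is immediate, though: $\delta(a)=c$ is the quadratic $(4c+1)a^2-i(4c+1)a-c=0$, whose roots sum to $i$ when $c\neq-\tfrac14$, so at most one root is real; hence $a\mapsto\delta(a)$ is injective on $\mathbb{R}$, and combined with your conjugation observation the four values are pairwise distinct. Second, the cusp of the cardioid corresponds to $c=-\tfrac14$, attained only in the limit $a\to\pm\infty$; the value $a=0$ gives the smooth vertex $c=0$, not the cusp --- a mislabel that does not affect any of your counts.
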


\begin{figure}
	\begin{center}
		\includegraphics[width=0.6\textwidth]{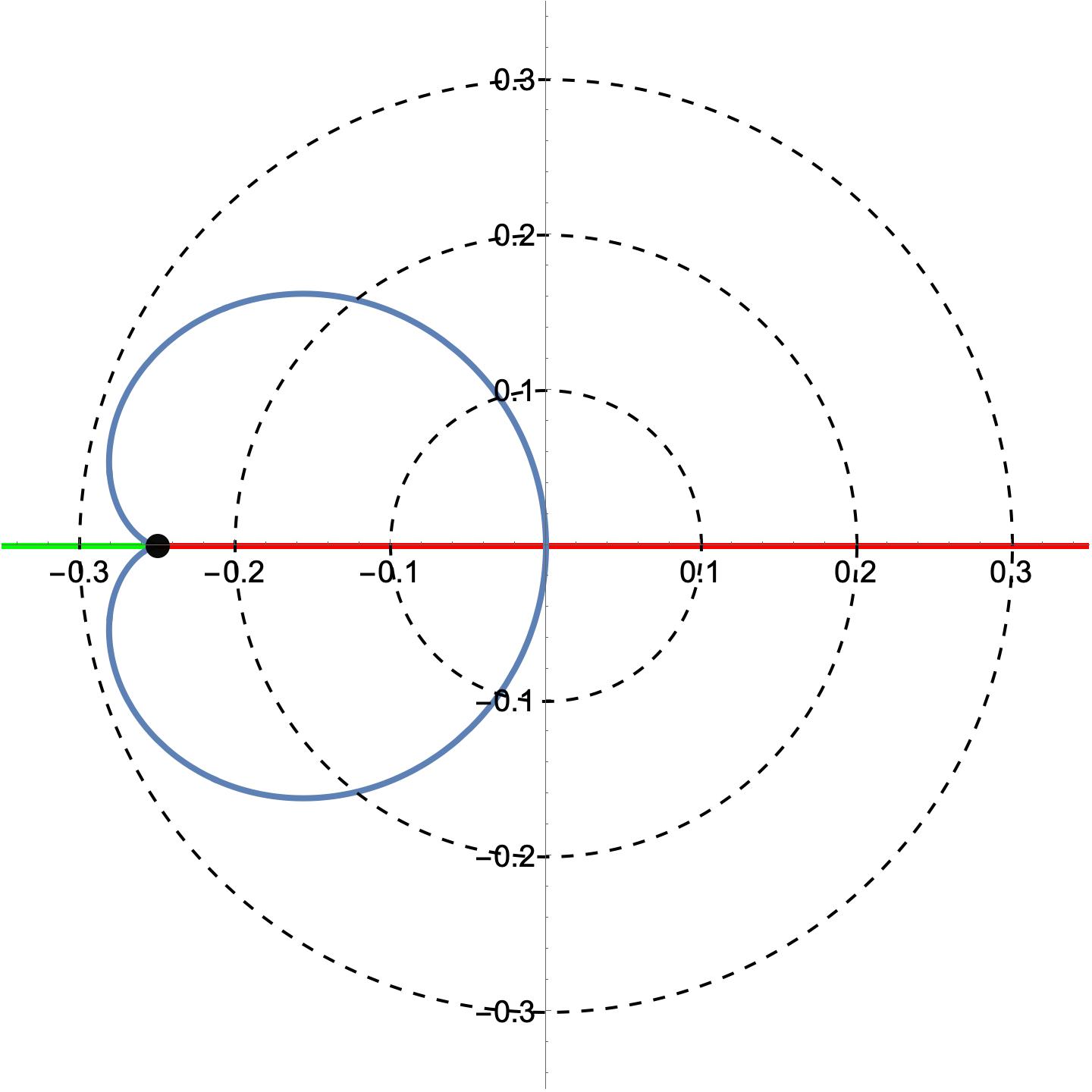}
	\end{center}
	\caption{Types of surface given by the Weierstrass data $(g,\omega) = (w, \delta \frac{\dif{w}}{w^2})$, depending on the value of $\delta \in \mathbb{C}$. The red line represents elliptic catenoids, black point represents parabolic catenoid, green line represents hyperbolic catenoids, and blue cardioid represents helicoids.}
	\label{Fig:202502060736PM}
\end{figure}

\begin{figure}
	\begin{center}
		\includegraphics[width=0.8\textwidth]{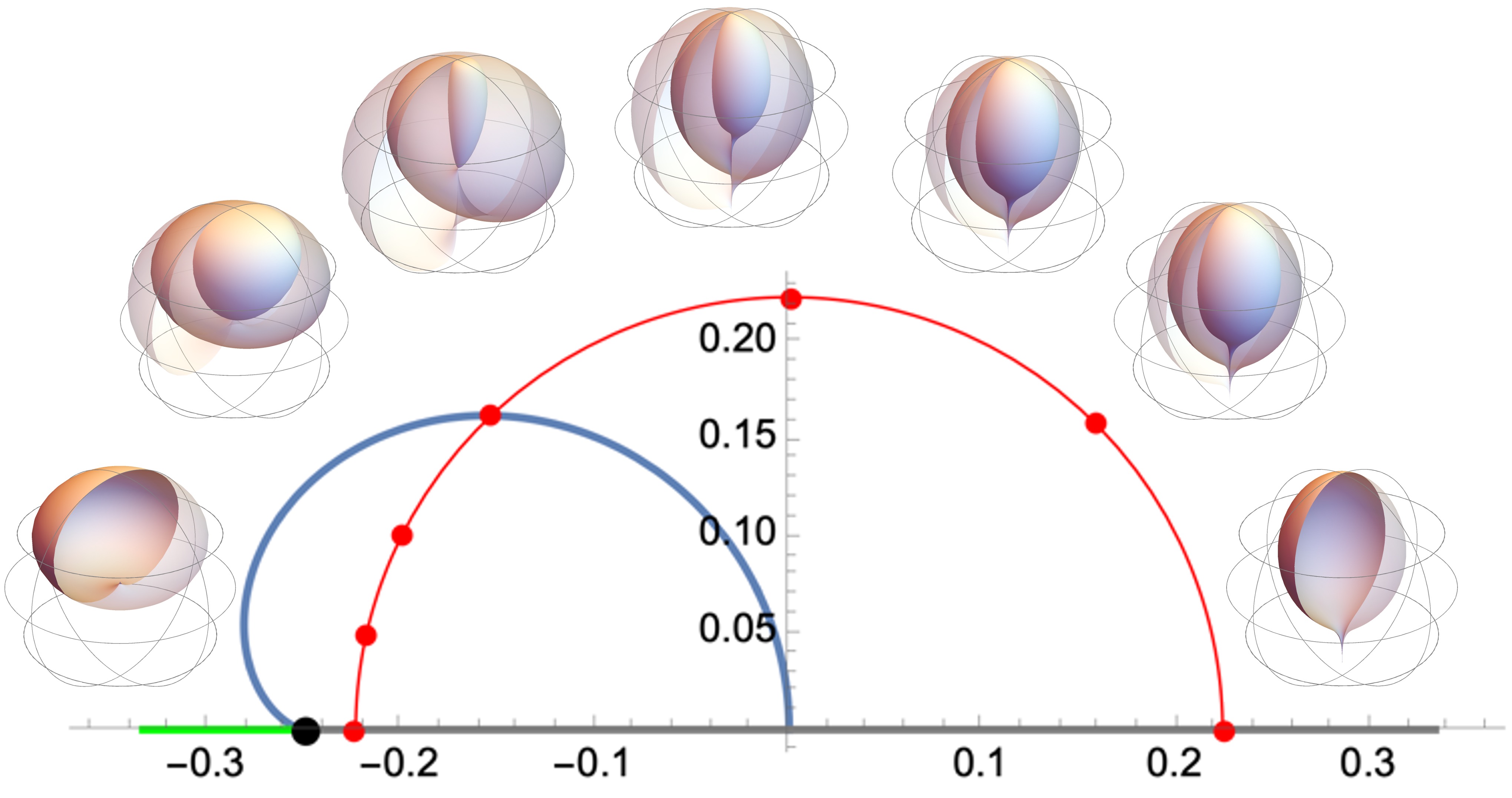}
	\end{center}
	\caption{Associated families of catenoid given by the Weierstrass data $(g,\omega) = (w, \frac{1}{2\sqrt{5}} \frac{\dif{w}}{w^2})$.}
	\label{Fig:cat1}
\end{figure}

\begin{figure}
	\begin{center}
		\includegraphics[width=0.8\textwidth]{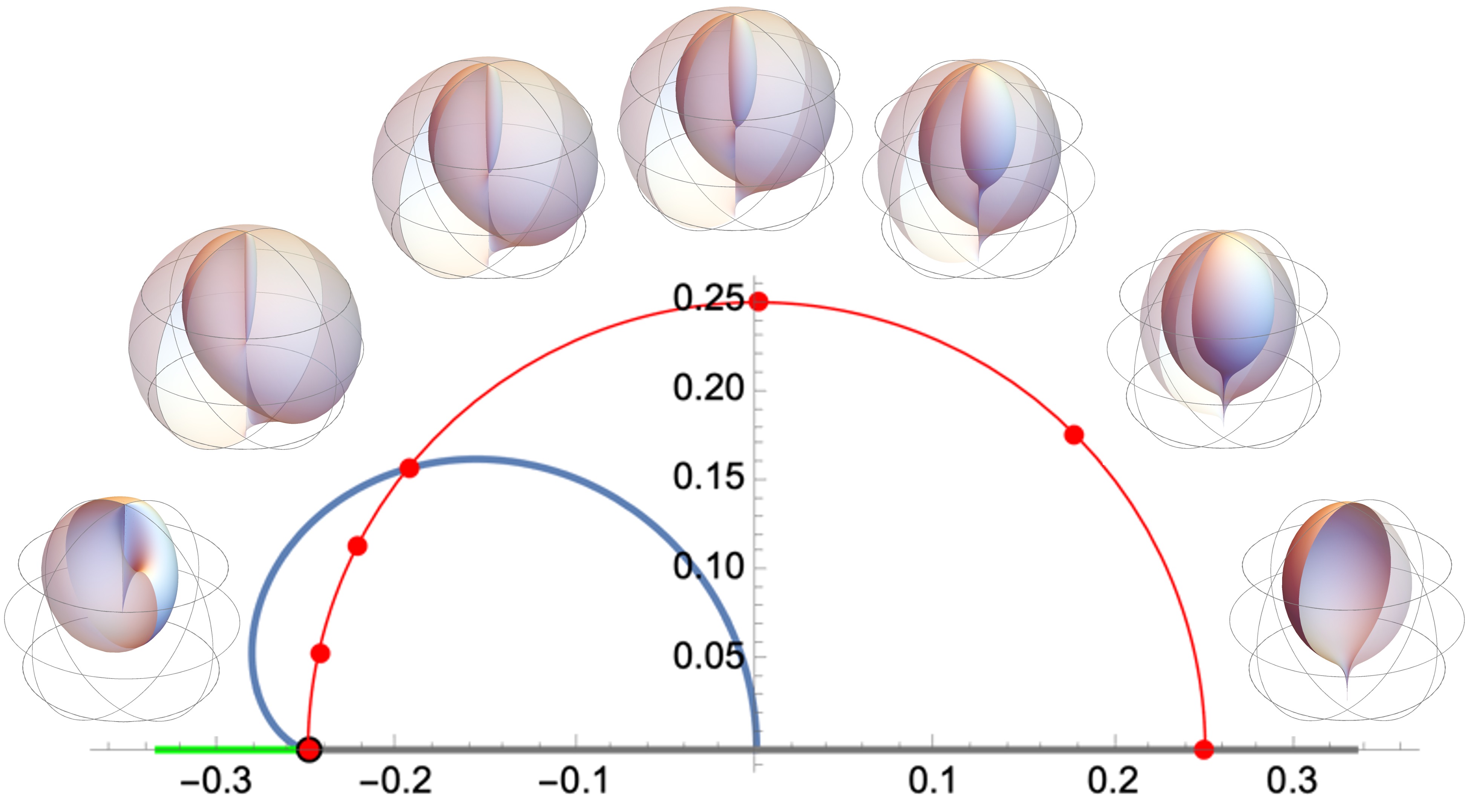}
	\end{center}
	\caption{Associated families of catenoid given by the Weierstrass data $(g,\omega) = (w, \frac{1}{4} \frac{\dif{w}}{w^2})$.}
	\label{Fig:cat2}
\end{figure}

\begin{figure}
	\begin{center}
		\includegraphics[width=0.8\textwidth]{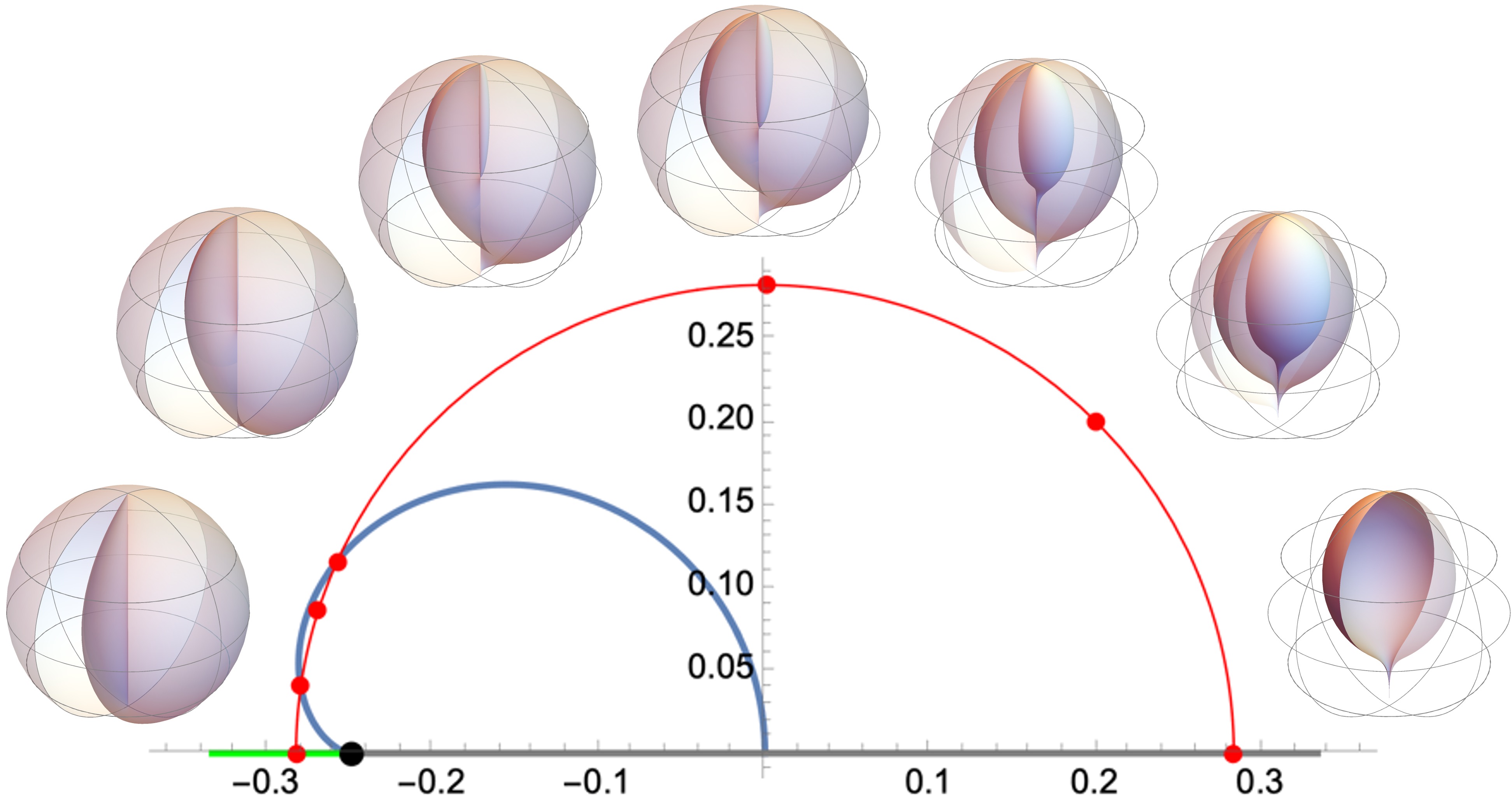}
	\end{center}
	\caption{Associated families of catenoid given by the Weierstrass data $(g,\omega) = (w, \frac{1}{\sqrt{13}} \frac{\dif{w}}{w^2})$.}
	\label{Fig:cat3}
\end{figure}

\begin{figure}
	\begin{center}
		\includegraphics[width=0.8\textwidth]{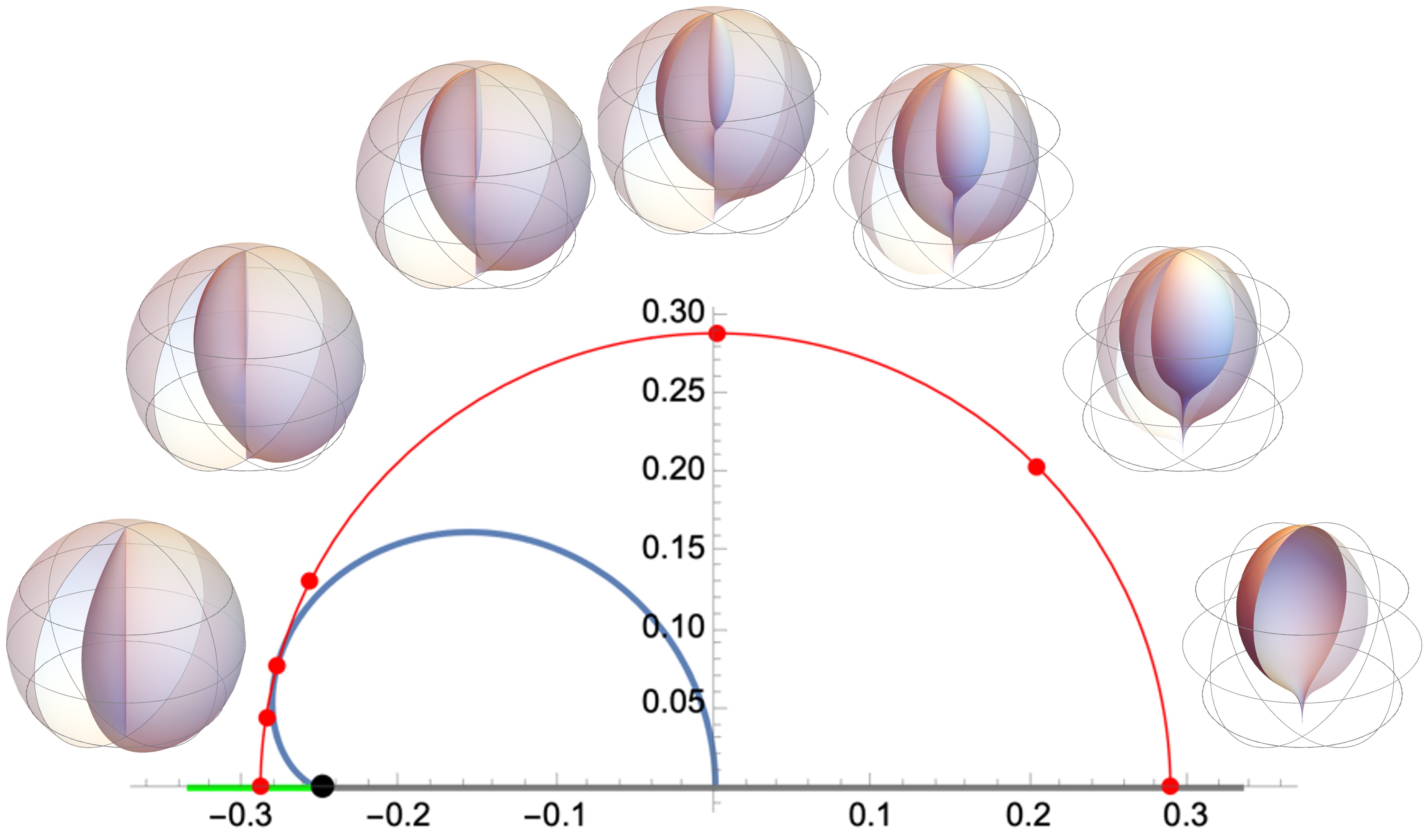}
	\end{center}
	\caption{Associated families of catenoid given by the Weierstrass data $(g,\omega) = (w, \frac{1}{2\sqrt{3}} \frac{\dif{w}}{w^2})$.}
	\label{Fig:cat4}
\end{figure}

\subsection{Associated family of catenoids under Lawson-type correspondence between $\ithree$ and $\qthreep$}\label{sec:lawson}
Lawson-type correspondence between ZMC surfaces in the isotropic $3$-space $\ithree$ and the $3$-dimensional lightcone $\qthreep$ has been established in \cite{pember_WeierstrasstypeRepresentations_2020} as a generalization of the Umehara-Yamada perturbation \cite{umehara_ParametrizationWeierstrassFormulae_1992}.

In this section, we derive a similar type of correspondence between ZMC surfaces in $\ithree$ and in $\qthreep$, and consider the correspondence between the associated family of catenoids in $\ithree$ and $\qthreep$.
The derivation is obtained efficiently by viewing both classes of surfaces as graphs:
Suppose that $X : \mathcal{U} \to \qthreep$ is an immersion represented as a graph of a function $f: \mathcal{U} \to \mathbb{R}$, that is,
	\[
		X(u,v) = e^{f(u,v)} \begin{pmatrix} u^2+v^2 & u +iv \\ u-iv & 1 \end{pmatrix}.
	\]
We have seen in \eqref{Eq:202503160928PM} that $X$ is a ZMC surface if and only if $f$ is a harmonic map.
However, it is known \cite{strubecker_DifferentialgeometrieIsotropenRaumes_1942a} that any ZMC surface in $\ithree$ must be a graph of a harmonic map, giving us the following correspondence between ZMC surfaces in $\ithree$ and $\qthreep$:
\begin{lemma}\label{Lem:202502020719PM}
	The following map
		\begin{equation}\label{Eq:202501110711AM}
			T : \ithree \to \qthreep, \qquad
			T(\rmx,\rmy, \ell)  :=    e^\ell \begin{pmatrix} \rmx^2+\rmy^2 & \rmx+i\rmy \\ \rmx-i\rmy & 1 \end{pmatrix}
		\end{equation}
	sends ZMC surfaces in $\ithree$ to ZMC surfaces in $\qthreep$.
\end{lemma}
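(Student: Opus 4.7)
The plan is to exploit the fact, stated in the excerpt just before the lemma, that every ZMC surface in $\ithree$ is a graph of a harmonic function (citing \cite{strubecker_DifferentialgeometrieIsotropenRaumes_1942a}), and combine it with the graph-type mean curvature formula \eqref{Eq:202503160928PM} for $\qthreep$. Concretely, I would start from a ZMC immersion in $\ithree$, parametrized (after choosing graph coordinates) as $(u,v) \mapsto (u, v, f(u,v))$ with $f : \mathcal{U} \to \mathbb{R}$ harmonic, i.e.\ $f_{uu} + f_{vv} = 0$.

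Next, I would apply the map $T$ from \eqref{Eq:202501110711AM} pointwise to obtain
\[
	(T \circ (\mathrm{id}, f))(u,v) = e^{f(u,v)} \begin{pmatrix} u^2+v^2 & u+iv \\ u-iv & 1 \end{pmatrix},
\]
and observe that this is precisely the graph representation $X_f : \mathcal{U} \to \qthreep$ introduced in Subsection~\ref{SubS:202503220849PM}. The mean curvature of such a graph is given by \eqref{Eq:202503160928PM} as
\[
	\mathrm{H} = \tfrac{1}{2} e^{-2f(u,v)} \bigl( f_{uu}(u,v) + f_{vv}(u,v) \bigr),
\]
so the harmonicity of $f$ forces $\mathrm{H} \equiv 0$, i.e.\ $T \circ (\mathrm{id},f)$ is a ZMC immersion into $\qthreep$.

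The only non-routine point is to confirm that one may always reduce to graph coordinates: this is handled by the cited Strubecker result for $\ithree$, while on the $\qthreep$ side the graph representation $X_f$ is available for any ZMC surface (as used throughout Section~\ref{SubS:202503220849PM}). Everything else is an immediate substitution, so there is no genuine obstacle. I would close by remarking that, since both the hypothesis and the conclusion are characterized by the same scalar PDE $\Delta f = 0$, the map $T$ in fact sets up a bijection between graph-type ZMC surfaces in $\ithree$ and graph-type ZMC surfaces in $\qthreep$, which explains why this correspondence is the natural analogue of the Umehara--Yamada perturbation and foreshadows the associated-family comparison carried out in the remainder of Section~\ref{sec:lawson}.
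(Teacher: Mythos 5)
Your proposal is correct and follows essentially the same route as the paper: it invokes Strubecker's result that ZMC surfaces in $\ithree$ are graphs of harmonic functions, notes that $T$ carries such a graph to the graph representation $X_f$ in $\qthreep$, and concludes $\mathrm{H}\equiv 0$ from the mean curvature formula \eqref{Eq:202503160928PM}. Nothing is missing.
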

Note that
	\[
		T(\rmx,\rmy,\ell) 
		=
		P(\rmx+i\rmy) D_1(e^{-\ell/2}) \begin{pmatrix} 0 \\ 1 \end{pmatrix}
		\left( P(\rmx+i\rmy) D_1(e^{-\ell/2}) \begin{pmatrix} 0 \\ 1 \end{pmatrix} \right)^\star.
	\]

\begin{example}
The image of the plane $ \ell = f(\rmx,\rmy) = a \rmx + b \rmy +c$ in $\ithree$ under $T$ in \eqref{Eq:202501110711AM} is
	\[
		X(u,v) = e^{2(a u + b v + c)} \begin{pmatrix}  u^2+v^2 & u+iv \\ u-iv & 1 \end{pmatrix}, 
	\]
which is the parabolic catenoid.
\end{example}

We shall now examine the relationship between the associated families of catenoids in $\ithree$ and $\qthreep$.
For arbitrary real constants  $\alpha$ and $\beta$, consider
	\[
		\Phi(\tilde{z}) :=  e^{\alpha+i\beta} (\tilde{z}, -i\tilde{z}, \ln{\tilde{z}}) \in \mathbb{C}^3,\qquad \tilde{z}:=e^{u+iv} \in \mathbb{C}.
	\]
With $r:=e^{\alpha}$,
	\[
		\tilde{X}_{\alpha,\beta}(u,v) := \mathrm{Re}(\Phi(z))
		= r \left( e^{u} \cos(v+\beta), e^{u} \sin(v+\beta),  u \cos\beta - v \sin\beta \right)% \in \ithree
	\]
are associate families of catenoids under homotheties in $\ithree$.
By Lemma~\ref{Lem:202502020719PM}, %the following map
	\[
		X_{\alpha,\beta}(u,v) := e^{r (u\cos\beta - v\sin\beta)} 
			\begin{pmatrix}  e^{2(u+\alpha)} & e^{u+\alpha} e^{i(v+\beta)}  \\  e^{u+\alpha} e^{-i(v+\beta)}   &  1  \end{pmatrix} 
	\]
is a ZMC surface in $\qthreep$  for each $\alpha, \beta \in \mathbb{R}$.
Then using $c := \frac{1}{2}e^{\alpha+i\beta} \in \mathbb{C}\setminus\{0\}$, we can rewrite $X_{\alpha,\beta} = X_c$ as
	\begin{equation}\begin{aligned} \label{eqn:xab}
		X_c (u,v)
			&= X_{\alpha,\beta}(u,v) \\
			&= e^\alpha e^{r (u\cos\beta - v\sin\beta)}  D_1(e^{\frac{1}{2}(\alpha+i \beta)})
				\begin{pmatrix}
					e^{2u} & e^{u+i v} \\
					e^{u-iv} & 1
				\end{pmatrix}D_1(e^{\frac{1}{2}(\alpha+i \beta)})^\star\\
			&= e^\alpha D_1(e^{\frac{1}{2}(\alpha+i \beta)})\varphi_{c}(z) \varphi_{c}(z)^\star D_1(e^{\frac{1}{2}(\alpha+i \beta)})^\star\\
			&\simeq \varphi_{c}(z) \varphi_{c}(z)^\star
	\end{aligned}\end{equation}
where $\varphi_{c}(z) := e^{c z} \begin{pmatrix} e^z \\ 1 \end{pmatrix}$.

To find the Weierstrass data for $X_c$, we use \eqref{eqn:xab} to assume without loss of generality that
	\[
		X_{c}(z) = \varphi_{c}(z) \varphi_{c}(z)^\star,
	\]
that is, the lift of $X_c$ to $\mathbb{C}^2$ is $\varphi_{c}(z)$.
Then the lift $F_c$ of $X_c$ to $\mathrm{SL}(2,\mathbb{C})$ can be found using \eqref{Eq:202503230200PM} as
	\[
		F_{c}(z) :=
			\begin{pmatrix} e^{\tfrac{z}{2}} & 0 \\ 0 & e^{-\tfrac{z}{2}} \end{pmatrix}
			\begin{pmatrix}
				1 & -\frac{(c+1)^2}{2 c+1} \\
				 1 & -\frac{c^2}{2 c+1}
			\end{pmatrix}
			\begin{pmatrix} e^{cz} & 0 \\ 0 & e^{-cz} \end{pmatrix}
			\begin{pmatrix} e^{\tfrac{z}{2}} & 0 \\ 0 & e^{-\tfrac{z}{2}} \end{pmatrix},
	\]
and we can see that $F_c$ is indeed null-holomorphic.
Direct calculations then show that
	\begin{align*}
		(G,\Omega) &= \left((1 + \tfrac{1}{c})e^z, -c^2 e^{-z} \dif{z}\right),\\
		(g,\omega) &=\left(-\tfrac{c (c+1) }{2 c+1} e^{-(2 c+1) z},  -e^{(2 c +1)z}\dif{z}\right).
	\end{align*}
%Note that
%\[
%	Q = \Omega \dif{G} = \omega \dif{g} = -c(c+1) \dif{z}^2.
%\]
%
Now we make a coordinate change $w := g$ so that
	\[
		(\tilde{g}_c, \tilde{\omega}_c) = \left(w, -\frac{c(c+1)}{(2c+1)^2}\frac{\dif{w}}{w^2}\right).
	\]
Since the map $\tilde{\delta}(c)$ given by
	\[
		\tilde{\delta}(c) = -\frac{c(c+1)}{(2c+1)^2}
	\]
is a surjection onto $\mathbb{C} \setminus \{ -\frac{1}{4}\}$, we conclude that every surface in the associated families of elliptic or hyperbolic catenoids in $\qthreep$ corresponds to a surface in the associated familiy of catenoids in $\ithree$, under the correspondence in Lemma~\ref{Lem:202502020719PM}.

In particular, if $X_c$ is a ZMC surface constructed using Weierstrass data $(\tilde{g}_c, \tilde{\omega}_c)$, then it is
	\begin{itemize}
		\item an elliptic catenoid when $c \in \mathbb{R}\setminus\{\tfrac{1}{2}\}$ so that $\tilde{\delta}(c) \in (-\frac{1}{4},\infty)$,
		\item a hyperbolic catenoid when $c = -\frac{1}{2} + i \tilde{c}$ for $\tilde{c} \in \mathbb{R} \setminus \{0\}$ so that $\tilde{\delta}(c) \in (-\infty, -\frac{1}{4})$, or
		\item a helicoid when $c \in i \mathbb{R} \setminus \{0\}$.
	\end{itemize}

\subsection{Lightlike Gauss maps of catenoids and helicoids}\label{Subsec:202503230138PM}
Let $X: \mcU \to \qthreep$ be a conformally parametrized ZMC immersion with conformal coordinates $(u,v)$.
Then the lightlike Gauss map $\G : \mcU \to \qthreem$ can be viewed as a surface into $\qthreem$, called the \emph{associated surface} of $X$ in \cite{Liu2}, and the following facts are known:
\begin{fact}
Let $X: \mcU \to \qthreep$ be a ZMC immersion with conformal coordinates $(u,v) \in \mcU$.
If the lightlike Gauss map $\G$ is immersed, then $\G$ is also a conformally parametrized ZMC immersion. Furthermore, the first fundamental form $\mathbf{g}_\G$ of $\G$ is given in terms of the first fundamental form of $X$ via
	\[
		\mathbf{g}_\G=-\mathrm{K}\mathbf{g}_X.
	\]
\end{fact}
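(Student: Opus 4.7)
The plan is to work in the null basis $\{X, \G, X_u, X_v\}$ of $\lfour$ at each point of $\mcU$ and write out $\G_u$ and $\G_v$ explicitly in this basis. Differentiating the defining identities $\langle \G, \G\rangle = 0$, $\langle \G, X\rangle = 1$, and $\langle \G, X_u\rangle = \langle \G, X_v\rangle = 0$ immediately yields $\langle \G_u, \G\rangle = \langle \G_u, X\rangle = 0$, and also $\langle \G_u, X_u\rangle = -\langle \G, X_{uu}\rangle$ and $\langle \G_u, X_v\rangle = -\langle \G, X_{uv}\rangle$, with the analogous identities for $\G_v$. Writing $a := \langle \G, X_{uu}\rangle$, $b := \langle \G, X_{uv}\rangle$, $c := \langle \G, X_{vv}\rangle$, so that these are precisely the entries of $\mathbf{A}$, and using conformality $\langle X_u, X_u\rangle = \langle X_v, X_v\rangle = \rho$ with $\langle X_u, X_v\rangle = 0$, one reads off
\[
\G_u = -\tfrac{1}{\rho}(a\, X_u + b\, X_v), \qquad \G_v = -\tfrac{1}{\rho}(b\, X_u + c\, X_v).
\]

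Next, I would invoke the ZMC assumption $\mathrm{H} = (a+c)/(2\rho) = 0$ to conclude $c = -a$. A direct computation from the above expressions then gives
\[
\langle \G_u, \G_u\rangle = \langle \G_v, \G_v\rangle = \tfrac{a^2+b^2}{\rho}, \qquad \langle \G_u, \G_v\rangle = 0,
\]
so that $\G$ is conformally parametrized in the same coordinates. Since $\mathrm{K} = \det(\mathbf{g}^{-1}\mathbf{A}) = (ac-b^2)/\rho^2 = -(a^2+b^2)/\rho^2$, the identity $\mathbf{g}_\G = -\mathrm{K}\,\mathbf{g}_X$ is immediate. Note also that the assumption that $\G$ is immersed is precisely $(a,b) \neq (0,0)$, which is consistent with $-\mathrm{K} > 0$ on this locus.

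For the ZMC property of $\G$, the key observation is that the roles of $X$ and $\G$ are symmetric: adapting the lightlike Gauss map formalism to immersions into $\qthreem$, the map $X$ itself plays the role of the lightlike Gauss map of $\G$, since $\langle X, X\rangle = 0$, $\langle X, \G\rangle = 1$, and the explicit formulas above give $\langle X, \G_u\rangle = \langle X, \G_v\rangle = 0$. Consequently $\mathbf{A}_\G = \langle X, \G_{ij}\rangle \dif u^i \dif u^j$, and differentiating $\langle X, \G_i\rangle = 0$ once gives $\langle X, \G_{ij}\rangle = -\langle X_i, \G_j\rangle = \langle \G, X_{ij}\rangle$; hence $\mathbf{A}_\G$ has the same matrix of coefficients as $\mathbf{A}$. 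Combined with $\mathbf{g}_\G = -\mathrm{K}\mathbf{g}_X$, this gives $\mathrm{H}_\G = (a+c)/(-2\mathrm{K}\rho) = 0$. The whole argument is computationally routine once the null basis expansion is in place; the only step requiring a moment of care is checking that the lightlike Gauss map formalism of Section~\ref{Section2} transplants verbatim from $\qthreep$ to $\qthreem$, which follows from the symmetry of the two light cones in $\lfour$.
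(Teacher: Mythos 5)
Your argument is correct. Note that the paper does not prove this statement at all: it is labelled a Fact and quoted from Liu's work on the associated surface (\cite{Liu2}), so there is no in-paper proof to compare against. Your null-frame computation is a complete, self-contained verification. Expanding $\G_u,\G_v$ in the basis $\{X,\G,X_u,X_v\}$ gives exactly $\G_u=-\tfrac1\rho(aX_u+bX_v)$, $\G_v=-\tfrac1\rho(bX_u+cX_v)$, and with $c=-a$ from $\mathrm{H}=0$ the conformality of $\G$ and the identity $\mathbf{g}_\G=-\mathrm{K}\,\mathbf{g}_X$ follow as you state; the nicest part is the duality observation that $X$ itself satisfies the defining relations of the lightlike Gauss map of $\G$ (with the roles of $\qthreep$ and $\qthreem$ exchanged, which is legitimate by the evident symmetry of the two cones), whence $\mathbf{A}_\G=\mathbf{A}_X$ and $\mathrm{H}_\G=0$ is immediate rather than requiring a second fundamental form computation for $\G$ from scratch. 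Two small points worth making explicit if you write this up: (i) $\{X,\G,X_u,X_v\}$ is indeed a basis because $\Span\{X_u,X_v\}$ is spacelike and $X,\G$ are independent null vectors in its Lorentzian orthogonal complement (so the expansion step is justified wherever $X$ is a spacelike immersion), and (ii) the hypothesis that $\G$ is immersed is what guarantees the conformal factor $(a^2+b^2)/\rho=-\mathrm{K}\rho$ is positive, so $\G$ is again a spacelike conformal immersion, and the mean curvature of $\G$ is computed with respect to its own (uniquely determined) lightlike Gauss map, which is $X$.
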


Normalizing helicoids $\tilde{H}^a (u,v)$ in \eqref{Eq:202501280355AM} with $b = \frac{1}{2}$ so that
	\[
		\tilde{H}^a (u,v) = e^{-2av}\begin{pmatrix} e^{2u} & e^{u+iv} \\ e^{u-iv} & 1 \end{pmatrix},
	\]
the lightlike Gauss map $\G^a(u,v)$ of $\tilde{H}^{a}(u,v)$ is given by
	\[
		\G^a(u,v) = -2 e^{2av} \begin{pmatrix}
				a^2 + 1 & a(a-i) e^{-u+iv} \\
				a(a+i) e^{-u-iv} & a^2 e^{-2u}
			\end{pmatrix}.
	\]
Setting $e^{\alpha + i \beta} := a - i$ for $\alpha,\beta \in \mathbb{R}$, we then notice
	\begin{align*}
		\G^a(u,v) &= -2 e^{2av} \begin{pmatrix}
				e^{2 \alpha} & a e^{\alpha + i \beta} e^{-u+iv} \\
				ae^{\alpha - i \beta} e^{-u-iv} & a^2 e^{-2u}
			\end{pmatrix}\\
			&= -2a e^\alpha D_2(\tfrac{i}{\sqrt{a}})D_1(e^{-\frac{1}{2}(\alpha + i \beta)}) \tilde{H}^{a}(-u, -v) D_1(e^{-\frac{1}{2}(\alpha + i \beta)})^\star D_2(\tfrac{i}{\sqrt{a}})^\star \\
			&\simeq \tilde{H}^{a}(-u, -v),
	\end{align*}
so that $\G^a(u,v)$ is again a helicoid.
We can similarly check that if a ZMC surface in $\qthreep$ is a catenoid, then its lightlike Gauss map is also a catenoid:
\begin{proposition}
	Let $X: \mcU\to \qthreep$ be a helicoid or (an elliptic, a hyperbolic, a parabolic) catenoid.
	Then the lightlike Gauss map of $X$ is also a helicoid or (an elliptic, a hyperbolic, a parabolic) catenoid in $\qthreem$, respectively.
\end{proposition}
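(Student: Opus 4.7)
The helicoid computation preceding the statement serves as a template for the proof, and the remaining task is to carry out three analogous computations—one for each catenoid type—using the explicit $\sltc$-lifts $F^E_a$, $F^H_b$, $F^P_c$ from Section~\ref{section2.3}. For each case I would extract the lightlike Gauss map $\G$ from the surface data and then, via the equivalence relation $\simeq$, recast it in the canonical form of the corresponding catenoid with a suitably transformed parameter, thereby identifying the type explicitly.

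The conceptual reason these computations must succeed is the equivariance of the lightlike Gauss map under the $\sltc$-action: since every defining relation for $\G$ involves only the $\sltc$-invariant pairing $\langle\cdot,\cdot\rangle = -\det$ and partial derivatives of $X$, the one-parameter subgroup of rotational invariance that characterizes a catenoid — elliptic $\{D_1(e^{is})\}$, hyperbolic $\{D_1(e^s)\}$, or parabolic $\{P(s)\}$ — passes unchanged to $\G$. Hence $\G$ is itself a ZMC surface invariant under the same subgroup, now viewed inside $\qthreem$. Since the reflection $X \mapsto -X$ provides an $\sltc$-equivariant isometry $\qthreep \to \qthreem$, the classification of rotationally invariant ZMC surfaces in $\qthreem$ mirrors that of catenoids in $\qthreep$, forcing $\G$ to be a catenoid of the same type.

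The main obstacle I anticipate is the parabolic case, where the unipotent subgroup $\{P(s)\}$ produces the most degenerate orbits and one must verify that $\G^P_c$ is actually immersed and nonconstant for the word \emph{catenoid} to apply. For this I would appeal to the identity $\mathbf{g}_\G = -\mathrm{K}\mathbf{g}_X$ from the fact preceding the proposition: since $\mathrm{K}$ does not vanish identically on any of the three catenoids, $\G$ is automatically immersed on a dense open set. Finally, to pin down the precise parameter of the image catenoid, I would read off the Weierstrass data of $\G$ from~\eqref{Eq:202501250902AM} using the null-holomorphic lift already in hand and compare it against the canonical forms in~\eqref{Eq:202502070726AM}.
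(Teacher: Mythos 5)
Your computational plan is essentially the paper's own proof: the paper performs exactly this calculation for the helicoid (computing $\G^a$ from the explicit lift and recasting it, via $\simeq$, as $\tilde{H}^a(-u,-v)$) and then disposes of the three catenoid cases with a "we can similarly check," which is precisely what your first and third paragraphs would supply using $F^E_a$, $F^H_b$, $F^P_c$.

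One caution about your conceptual shortcut: equivariance of $\G$ under the $\sltc$-action does give invariance of $\G$ under the same one-parameter rotation group, but "ZMC $+$ rotationally invariant" does not by itself force "catenoid" — totally geodesic planes (e.g.\ the horosphere, which appears as $H^{0,b}$ in Theorem~\ref{Thm:202503130757AM}) are rotationally invariant ZMC surfaces as well, and a given surface could a priori be invariant under more than one type of rotation group. So to make that argument airtight you would need to exclude the totally geodesic case (for instance by noting that the Hopf differential, hence the second fundamental form, of $\G$ does not vanish for these surfaces) and to know that each catenoid type is invariant only under rotations of its own type; otherwise one falls back on the explicit computation, which is what the paper does and what your first paragraph already proposes.
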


%\begin{proposition}
%Let $X: \mcU \subset \mathbb{R}^2 \to \qthreep$ be a (ellptic, hyperbolic, parabolic) catenoid as \cite{CKLLY1}. Then the lightlike Gauss map of $X$, $G : \mcU \to \qthreem$, is also the (ellptic, hyperbolic, parabolic) catenoid in $\qthreem$.
%\end{proposition}

% !TEX root = ../RulesMinimalSurfaces.tex
%%%%%%%%%%%%%%%%%%%%%%%%%%%%%%%%%%%%%%%%%%%%%%%%%%
%\section{ZMC equation for ruled surfaces in $\qthreep$}
\section{Classification of ruled ZMC surfaces in $\qthreep$}\label{Section5}

In this section, we show that any ruled ZMC surface in $\qthreep$ must be either a helicoid \eqref{Eq:202501280355AM} or a parabolic catenoid \eqref{Eq:202501280440} up to isometry and homothety.
As noted in Lemma~\ref{lemma:helicoid}, the surface obtained by applying screw motions to a geodesic result in a ZMC surface, which we call helicoid; however, helices and geodesics do not meet orthogonally (even though they meet at a constant angle).
Therefore, the standard techniques of classifying ruled ZMC surface do not work in $\qthreep$.

To overcome this obstacle, we will view ruled surfaces as application of a curve of isometries to a geodesic, and find the condition on the isometries for the resulting surface to have ZMC.

Thus, without loss of generality, we assume that any spacelike ruled surface in $\qthreep$ is parameterized as
	\[
		X(s,t) = F(s) \delta(t) F(s)^\star
	\]
for a geodesic
$
\delta(t) := \begin{pmatrix} t^2 & t \\ t & 1 \end{pmatrix}
$ 
and some curve $F$ in $\sltc$ with $F(0)=I_2$.
Let 
	\begin{align*}
		X^F_*(s,t) &:= F(s)^{-1}X_*(s,t) (F(s)^{-1})^\star,  \\
		\Omega(s) &:= F(s)^{-1} F(s)' = \begin{pmatrix} \alpha(s) & \beta(s) \\ \gamma(s) & -\alpha(s) \end{pmatrix}.  
	\end{align*}
Then, direct calculations show that
	\begin{gather*}
		X^F(s,t)  = \delta(t), \quad
		X^F_s(s,t)  = \Omega(s) \delta(t) + \delta(t) \Omega(s)^\star, \quad
		X^F_t(s,t)  = \delta(t)', \\ %\begin{pmatrix} 2t & 1 \\ 1 & 0 \end{pmatrix}, \\
		X^F_{ss}(s,t)  = F(s)^{-1} F(s)'' \delta(t) + 2 \Omega(s) \delta(t) \Omega(s)^\star + \delta(t) (F(s)^{-1} F(s)'')^\star, \\
		X^F_{st}(s,t)  = \Omega(s)\begin{pmatrix} 2t & 1 \\ 1 & 0 \end{pmatrix} + \begin{pmatrix} 2t & 1 \\ 1 & 0 \end{pmatrix} \Omega(s)^\star, \qquad
		X^F_{tt}(s,t) = \begin{pmatrix} 2 & 0 \\ 0 & 0 \end{pmatrix}. 
	\end{gather*}
Here we note that
	\[
		F(s)^{-1} F(s)'' = (F^{-1} F')' + (F^{-1} F')^2= \Omega(s)' - \det{\Omega(s)} I_2
	\]
where $I_2$ is the $2\times2$ identity matrix.

Now let $\G(s,t)$ be the lightlike Gauss map of $X(s,t)$.
Then
	\[
		\G^F(s,t) := F(s)^{-1}\G(s,t) (F(s)^{-1})^\star
	\]
satisfies
	\[
		\langle \G^F, \G^F \rangle = \langle \G^F, X^F_s \rangle = \langle \G^F, X^F_t \rangle = \langle \G^F, X^F \rangle -1 = 0
	\]
To proceed further, let
	\[
		a_0 := \begin{pmatrix} -2 & 0 \\ 0 & 0 \end{pmatrix}, \quad
		a_1 := \begin{pmatrix} 0 & 1 \\ 1 & 0 \end{pmatrix}, \quad
		a_2 := \begin{pmatrix} 0 & i \\ -i & 0 \end{pmatrix}, \quad
		a_3 := \begin{pmatrix} 0 & 0 \\ 0 & 1 \end{pmatrix}, 
	\]
which form an asymptotic basis.

We realize that $\delta(t)$ is obtained by rotating $a_3$ by the parabolic rotation $F(t) := \begin{pmatrix} 1 & t \\ 0 & 1 \end{pmatrix}$.
That is, $\delta(t) =  F(t) a_3 F(t)^\star$.  We let
	\[
		\f_{\alpha}(t) := F(t) a_\iota F(t)^\star, \quad \iota=0,1,2,3.
	\]
Then for any $t$, $\{ \f_0, \f_1, \f_2, \f_3 \}$ is an asymptotic basis and we see that%
	\begin{align*}
		& X^F(s,t) = \f_3(t), \quad  X^F_t(s,t) = \f_1(t), \\
		&\Omega(s) = \frac{1}{2}(\alpha(t^2-1) + (\beta+\gamma)t) \f_0 + \frac{1}{2}(2\alpha t + \beta + \gamma) \f_1 - \frac{i}{2} (\beta-\gamma) \f_2 - \alpha \f_3, \\
		& X^F_s(s,t) =  (2t\alpha_1(s) + \beta_1(s) - t^2 \gamma_1(s)) \f_1(t)  \\
		&\qquad\qquad\qquad + (2t \alpha_2(s) + \beta_2(s) - t^2 \gamma_2(s)) \f_2(t) -2 (\alpha_1(s) - t \gamma_1(s)) \f_3(t),
\end{align*}
where $*_1 := \operatorname{Re}(*)$, $*_2 := \operatorname{Im}(*)$  for $*=\alpha, \beta, \gamma$.

Then 
	\begin{multline*}
		\qquad
		\G^F(s,t) = \f_0(t) + \frac{2 (\alpha_1(s) - t \gamma_1(s))}{2t \alpha_2(s) + \beta_2(s) - t^2 \gamma_2(s)} \f_2(t)  \\
		-\frac{2 (\alpha_1(s) - t \gamma_1(s))^2}{2t \alpha_2(s) + \beta_2(s) - t^2 \gamma_2(s)} \f_3(t)
		\qquad\qquad
	\end{multline*}
and
	\begin{align*}
		E &:= \langle X_s, X_s \rangle = \langle X^F_s, X^F_s \rangle, &
			L &:= \langle \G, X_{ss} \rangle = \langle \G^F, X^F_{ss} \rangle, \\
		F &:= \langle X_s, X_t \rangle = \langle X^F_s, X^F_t \rangle, &
			M &:= \langle \G, X_{st} \rangle = \langle \G^F, X^F_{st} \rangle, \\
		G &:= \langle X_t, X_t \rangle = \langle X^F_t, X^F_t \rangle, &
			N &:= \langle \G, X_{tt} \rangle = \langle \G^F, X^F_{tt} \rangle.
	\end{align*}
and the numerator of the mean curvature $\mathrm{H} = \frac{1}{2} \frac{EN-2FM+GL}{EG-F^2}$ of $X$ is a fourth-order polynomial 
	\[
		c_0(s) + c_1(s) t + c_2(s) t^2 + c_3(s) t^3 + c_4(s) t^4
	\]
of $t$ where
	\begin{align}
		c_4(s) &:=  -2 \gamma_2(s)    \left(\gamma_1(s)^2 + \gamma_2(s)^2 \right),  \nonumber \\
		c_3(s) &:= 8 \alpha_2(s)  \left( \gamma_1(s)^2+\gamma_2(s)^2\right)-2 \gamma_2(s) \gamma_1'(s)+2\gamma_1(s) \gamma_2'(s). \label{Eq:202404260724AM}
	\end{align}
Thus $X$ has ZMC if and only if 
	\[
c_0(s) = c_1(s) = c_2(s) = c_3(s) = c_4(s) = 0 \qquad\text{ for all } s.
	\]
From $c_4(s)=0$, %\eqref{Eq:202404260723AM}, 
we see that $\gamma_2(s)=0$ for all $s$.
Now to use \eqref{Eq:202404260724AM}, we distinguish two cases: $\alpha_2(s_0) \not=0$ for some $s_0$ or  $\alpha_2(s)=0$ for all $s$.

\textbf{Case 1.}
$\alpha_2(s_0) \not=0$ for some $s_0$.
By restricting our attention to the interval around $s_0$, we may assume without loss of generality that  $\alpha_2(s) \not=0$ for all $s$.
From the vanishing of $c_3(s)$ in \eqref{Eq:202404260724AM}, we conclude that $\gamma_1(s)=0$ for all $s$.
Then $\Omega(s)$ is an upper triangular matrix, and an interesting analysis can be carried out.
In this case, the coefficients of the numerator of the ZMC surface equation become the following:
	\begin{align*}
		c_0(s) &=2 \beta _2(s) \alpha _1(s)'+\alpha _1(s) \left(4 \alpha_2(s) \beta _1(s)-2 \beta _2(s)'\right)-4 \alpha _1(s)^2 \beta _2(s), \\
		c_1(s) &= 4 \alpha _2(s) \alpha _1(s)'-4 \alpha _1(s) \alpha _2(s)', \qquad
		c_2(s) = c_3(s) = c_4(s) =0.
	\end{align*}
From the vanishing of $c_1(s)$, we conclude that
	\begin{equation}\label{Eq:202404260746AM}
		\alpha_1(s) = d \, \alpha_2(s) 
	\end{equation} 
for some real constant $d$. We distinguish two cases.

\textbf{Case 1-1.} Suppose that $d \not= 0$ so that
	\begin{equation}\label{Eq:202405040409AM}
		\alpha_1(s) \not= 0  \qquad\text{for all s}. 
	\end{equation}
Let $c=1/d$.
Then,
	\[
		\frac{c_0(s)}{\alpha_1(s)^2} = - 4 \beta_2(s) - 2 \left(\frac{\beta_2(s)}{\alpha_1(s)} \right)' + 4 c \beta_1(s),
	\]
and $c_0(s)=0$ is equivalent to
	\begin{equation} \label{Eq:202404260747AM}
		\beta_1(s) = \frac{1}{c} \beta_2(s) + \frac{1}{2c} \left(\frac{\beta_2(s)}{\alpha_1(s)}\right)'.
	\end{equation}
Thus, $X$ has ZMC if and only if \eqref{Eq:202404260746AM} and \eqref{Eq:202404260747AM} hold while $\alpha_1$ and $\beta_2$ are arbitrary smooth functions.

By defining $f$ and $g$ by
	\begin{equation}\label{Eq:202502110809AM}
		\alpha_1(s) := g(s), \qquad   
		\beta_2(s) := f(s) g(s),
	\end{equation}
we have
	\[
		\alpha_2(s) := c g(s), \qquad
		\beta_1(s) := \frac{1}{c} f(s) g(s) + \frac{1}{2c} f(s)',
	\]
so that
	\[
		\Omega(s) = \Omega_1(s) + \Omega_2(s)
	\]
where
	\[
		\Omega_1(s) = \begin{pmatrix} (1+ic) g(s) & 2 (1+ic) g(s)\frac{f(s)}{2c} \\ 0 & -(1+ic) g(s) \end{pmatrix}, \qquad
		\Omega_2(s) = \begin{pmatrix} 0 & \left( \frac{f(s)}{2c} \right)' \\ 0 & 0 \end{pmatrix}.
	\]
	
With $G(s) := \int g(s) ds$,
	\begin{equation} \label{Eq:202404260816AM}
		F_1(s) := \begin{pmatrix} e^{(1+ic) G(s)} & 0 \\ 0 & e^{-(1+ic) G(s)} \end{pmatrix}, \qquad
		F_2(s) := \begin{pmatrix} 1 & \frac{f(s)}{2c}  \\ 0 & 1 \end{pmatrix},
	\end{equation}
and the matrix-valued function 
	\[
		F(s) := F_1(s) \, F_2(s)
	\]
is the (unique) solution to $F(s)^{-1} F(s)' = \Omega(s)$ with $F(0)=I_2$. 
Therefore 
	\[
		X(s,t) = F_1(s) F_2(s) \delta(t) F_2(s)^\star F_1(s)^\star 
	\]
with $F_1, F_2$ as in \eqref{Eq:202404260816AM} for some smooth real-valued functions $f, G$.
Note that for fixed $s$, the image of $t \mapsto F_2(s) \delta(t) F_2(s)^\star$ is the same as the image of $t \mapsto \delta(t)$. Hence, $X(s,t) = F_1(s)  \delta(t) F_1(s)^\star$.

Finally we notice from \eqref{Eq:202405040409AM} and \eqref{Eq:202502110809AM} that $g(s) \not=0$, hence we conclude that $G$ is a strictly monotone function, hence it has an inverse function $s = h(G)$ and we can take $G$ as a new variable.
We abuse notation by calling it $s$ again, and conclude that  
	\[
		X(s,t) = 
			\begin{pmatrix} e^{(1+ic) s} & 0 \\ 0 & e^{-(1+ic) s} \end{pmatrix}
			\begin{pmatrix} t^2 & t \\ t & 1 \end{pmatrix}
			\begin{pmatrix} e^{(1+ic)s} & 0 \\ 0 & e^{-(1+ic) s} \end{pmatrix}^\star.
	\]
Then for any nonzero real number $a$, we may set $s=a s'$ and $b=ca$, and by calling $s'$ as $s$ again, and conclude that
	\begin{equation}\label{Eq:202405040406AM}
		X(s,t) = 
			\begin{pmatrix} e^{(a+ib) s} & 0 \\ 0 & e^{-(a+ib) s} \end{pmatrix}
			\begin{pmatrix} t^2 & t \\ t & 1 \end{pmatrix}
			\begin{pmatrix} e^{(a+ib) s} & 0 \\ 0 & e^{-(a+ib) s} \end{pmatrix}^\star.
	\end{equation}
That is, any generic ruled ZMC surface in $\qthreep$ is congruent to one of the above. 

It is already shown that the surface in \eqref{Eq:202405040406AM} is a ruled ZMC surface in 
$\qthreep$.

\textbf{Case 1-2.} Suppose that $d=0$.
Then $\alpha_1(s)=0$ for all $s$, from which it follows that $c_0(s)$  and $c_1(s)$ are also 0 for all $s$.
In this case
	\[
		\Omega(s) = \begin{pmatrix}   i \alpha_2(s)  &  \beta_1(s) + i \beta(2)  \\ 0  & -  i \alpha_2(s) \end{pmatrix}.
	\]
We can easily solve $F(s)^{-1} F(s)'=\Omega(s)$ with $F(0)=I_2$ to obtain
	\begin{equation}\label{202405040544AM}
		F(s) = 
			\begin{pmatrix} 1 & B_1(s) + i B_2(s) \\ 0 & 1 \end{pmatrix}
			\begin{pmatrix} e^{iA(s)} & 0 \\ 0 & e^{-iA(s)}  \end{pmatrix},
	\end{equation}
where $A(s) := \int_0^s \alpha_2(\tilde{s}) d\tilde{s}$, $B_1(s) + i B_2(s) := \int_0^s (\beta_1(\tilde{s}) + i \beta_2(\tilde{s}) ) e^{2iA(\tilde{s})} d\tilde{s}$.
So we conclude that $F(s)$ is of the form in \eqref{202405040544AM} for some real-valued functions $A,B_1,B_2$.

\textbf{Case 2.} $\alpha_2(s)=0$ for all $s$.
Then $c_2(s)=-6 \beta_2(s) \gamma_1(s)^2$.
We again have two cases:

\textbf{Case 2-1.} Suppose that $\beta_2(s_1)\not=0$ for some $s_1$.
By continuity, we may assume without loss of generality that $\beta_2(s)\not=0$ for all $s$, so that $\gamma_1(s)=0$ for all $s$. 
Then 
	\begin{align*}
		& c_0(s) = \alpha_1(s)' \beta_2(s) - \alpha_1(s) \beta_2(s)' - 2 \alpha_1(s)^2 \beta_2(s), \\
		& c_1(s) = c_2(s) = c_3(s) = c_4(s) =0.
	\end{align*}
Hence the mean curvature is identically zero if and only if 
	\[
		\alpha_1(s)' \beta_2(s) - \alpha_1(s) \beta_2(s)' - 2 \alpha_1(s)^2 \beta_2(s) =0,
	\]
or equivalently
	\[
		\beta_2(s) = c_1 \alpha_1(s) e^{-2  \int_0^s \alpha_1(\tilde{s}) \, d\tilde{s}} = c_2 A(s)' e^{-A(s)}.
	\]
In this case,
	\begin{equation}\label{Eq:202405040711PM}
		\Omega(s) = \begin{pmatrix} \alpha_1(s) & \beta_1(s) + i \beta_2(s) \\ 0 & - \alpha_1(s)\end{pmatrix},
	\end{equation}
and $F^{-1} F' = \Omega$ with $F(0)=I_2$ yields
	\begin{equation} \label{Eq:202405040407AM}
		F(s) = \begin{pmatrix} 1 & B_1(s) + i B_2(s) \\ 0 & 1 \end{pmatrix}
			\begin{pmatrix} e^{A(s)/2} & 0 \\ 0 & e^{-A(s)/2} \end{pmatrix}
	\end{equation}
where $A(s) := 2 \int_0^s \alpha_1(\tilde{s}) d\tilde{s}$,  $B_1(s) := \int_0^s \beta_1(\tilde{s}) e^{A(\tilde{s})} d\tilde{s}$, and  $B_2(s) := c_2 A(s)$.

Conversely, if $F$ is defined by \eqref{Eq:202405040407AM} with arbitrary $A(s)$ and $B_1(s)$ with $B_2(s) := c_2 A(s)$ where $c_2$ is an arbitrary real number, then $\Omega(s) := F(s)^{-1} F(s)'$ satisfies \eqref{Eq:202405040711PM} with
$ \alpha_1(s) = \tfrac{1}{2} A(s)', \beta_1(s) = B_1(s)' e^{-A(s)}, \beta_2(s) = B_2(s)' e^{-A(s)}$.

\textbf{Case 2-2.} Suppose that $\beta_2(s)=0$ for all $s$.
Then coefficients $c_0, c_1, c_2, c_3, c_4$ are all zero.
In this case, $\Omega(s)$ is real and trace-free, i.e. $\Omega(s) \in \lasltr$.
Then $F$ such that $F^{-1} F' = \Omega$ is real-valued, i.e. $F(s) \in \sltr$.
This means that $X = F(s)\delta(t)F(s)^\star$ is real-valued, which in turn means that the $\rmxy$-component of $X(s,t)$ is zero, hence the image of $X$ lies in 
	\[
		\qthreep \cap \{ \rmxx=0\} = \{ (\rmxt,\rmxx,\rmxy,\rmxz) : \rmxt^2 + \rmxx^2 +\rmxz^2 =0, \ \rmxy=0 \}.
	\]
Since it is not spacelike, we exclude this case.  

In summary, we have the following:
\begin{proposition}
	A surface given by $X(s,t) := F(s) \delta(t) F(s)^\star$ where  $\delta(t) := \begin{pmatrix} t^2 & t \\ t & 1 \end{pmatrix}$ and
		\begin{align}
			F(s) &:= \begin{pmatrix} e^{a s} & 0 \\ 0 & e^{-a s} \end{pmatrix}	 \begin{pmatrix} e^{ib s} & 0 \\ 0 & e^{-ib s} \end{pmatrix},	
			\qquad\text{or} 		\nonumber \\
			F(s) &:= \begin{pmatrix} 1 & B_1(s) + i B_2(s) \\ 0 & 1 \end{pmatrix} \begin{pmatrix} e^{iA(s)} & 0 \\ 0 & e^{-iA(s)} \end{pmatrix},
 			\qquad\text{or} 		 	\label{Eq:202501040822AM} \\
			F(s) &:= \begin{pmatrix} 1 & B_1(s) + i c_2 A(s) \\ 0 & 1 \end{pmatrix} \begin{pmatrix} e^{A(s)/2} & 0 \\ 0 & e^{-A(s)/2} \end{pmatrix} \label{Eq:202501041234PM} 
		\end{align}	
	for arbitrary real numbers $a,b,c$ and  arbitrary real-valued functions $A, B_1, B_2$, is a ruled ZMC surface.

	Conversely, any ruled ZMC surface in $\qthreep$ is congruent to one of the above.
\end{proposition}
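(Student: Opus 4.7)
The plan is to parametrize any ruled ZMC surface in $\qthreep$, without loss of generality, as $X(s,t) = F(s)\delta(t)F(s)^\star$ for some smooth curve $F: I \to \sltc$ with $F(0) = I_2$, acting on a single fixed geodesic $\delta(t) = \begin{pmatrix} t^2 & t \\ t & 1 \end{pmatrix}$; this is legitimate because Lemma~\ref{lemma:congruent} shows that every geodesic is congruent to $\delta$, and ambient isometries of $\qthreep$ act by conjugation in $\sltc$. The classification problem then reduces to determining which Maurer--Cartan forms $\Omega(s) := F(s)^{-1}F(s)' \in \lasltc$ produce $\mathrm{H} \equiv 0$.

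To write the ZMC equation explicitly, I would pass to the pulled-back frame $X^F := F^{-1}X(F^{-1})^\star$ and $\G^F := F^{-1}\G(F^{-1})^\star$. Since the ambient bilinear form is $\sltc$-invariant, all first and second fundamental form coefficients may be computed from $X^F$ and $\G^F$; all derivatives of $X^F$ admit a clean decomposition in the asymptotic basis $\{\f_0(t), \f_1(t), \f_2(t), \f_3(t)\}$ obtained by conjugating $\{a_0, a_1, a_2, a_3\}$ by the parabolic rotation $P(t)$ (so in particular $X^F = \f_3(t)$ and $X^F_t = \f_1(t)$). Writing $\Omega = \begin{pmatrix} \alpha & \beta \\ \gamma & -\alpha \end{pmatrix}$, expanding $X^F_s$, $X^F_{ss}$, $X^F_{st}$ in the $\f$-basis, and using the orthogonality conditions to read off $\G^F$, the numerator of $\mathrm{H} = \frac{1}{2}\frac{EN - 2FM + GL}{EG - F^2}$ becomes a fourth-degree polynomial $\sum_{k=0}^{4} c_k(s)\,t^k$ in $t$ whose coefficients depend on the real and imaginary parts $\alpha_j, \beta_j, \gamma_j$ and their first derivatives. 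The ZMC condition is then equivalent to $c_k(s) \equiv 0$ for $k=0,\dots,4$.

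The main obstacle is the case analysis of this ODE system. The top coefficient $c_4 \equiv 0$ immediately forces $\gamma_2 \equiv 0$, and I would then split on whether $\alpha_2$ vanishes identically. In Case~1, where $\alpha_2 \not\equiv 0$ on some interval, $c_3 \equiv 0$ forces $\gamma_1 \equiv 0$ so $\Omega$ is upper triangular, while $c_1 \equiv 0$ yields the proportionality $\alpha_1 = d\,\alpha_2$ for a constant $d$; integrating $F^{-1}F' = \Omega$ and absorbing an upper-triangular factor that stabilizes $\delta(t)$, the subcase $d\neq 0$ reduces after a reparametrization in $s$ to the diagonal form in the statement, and the subcase $d = 0$ reduces to form~\eqref{Eq:202501040822AM}. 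In Case~2, where $\alpha_2 \equiv 0$, one computes $c_2 = -6\,\beta_2\,\gamma_1^2$, so either $\beta_2 \not\equiv 0$ (forcing $\gamma_1 \equiv 0$, and then $c_0 = 0$ becomes $\alpha_1'\beta_2 - \alpha_1\beta_2' - 2\alpha_1^2\beta_2 = 0$, whose solutions after integration give form~\eqref{Eq:202501041234PM}), or $\beta_2 \equiv 0$, in which case $\Omega \in \lasltr$ and $F \in \sltr$, so the image of $X$ lies in $\qthreep \cap \{\rmxx = 0\}$ and is not spacelike, hence excluded.

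The converse direction is a direct verification: for each of the three candidate forms listed in the statement, the corresponding $\Omega = F^{-1}F'$ satisfies the algebraic constraints derived above (namely $\gamma_2 \equiv 0$ together with the appropriate vanishing among $\alpha_2$, $\gamma_1$ and the ODE on the remaining entries), so every $c_k(s)$ vanishes and $\mathrm{H} \equiv 0$. Combined with the spacelikeness check that rules out the $\sltr$-valued subcase, this completes the classification.
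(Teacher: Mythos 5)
Your proposal is correct and follows essentially the same route as the paper: the same ansatz $X = F\delta F^\star$ justified by congruence of geodesics, the same Maurer--Cartan form $\Omega = F^{-1}F'$, the same asymptotic basis $\{\f_0,\f_1,\f_2,\f_3\}$ yielding a quartic numerator in $t$, and the identical case analysis ($c_4=0 \Rightarrow \gamma_2\equiv 0$, then splitting on $\alpha_2$, with the $\sltr$-valued subcase excluded by spacelikeness), ending with the same three normal forms. No substantive differences to report.
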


Now we analyse the shapes of the ruled ZMC surfaces given by \eqref{Eq:202501040822AM} and \eqref{Eq:202501041234PM}. 
First of all, we see that for $F$ in \eqref{Eq:202501040822AM}, 
	\[
		X(s,t) = \begin{pmatrix} * & ** \\ *** & 1 \end{pmatrix},
	\]
that is, the surface is the intersection of $\qthreep$ and the hyperplane $\rmxt - \rmxz =1$ in $\lfour$.
So it is simply the horosphere
	\[
		\{ (\rmxt,\rmxx,\rmxy,\rmxz) \in \qthreep :  \rmxt + \rmxz = \rmxx^2+\rmxy^2\}.
	\]

Now let $X$ be given by $F$ in \eqref{Eq:202501041234PM}.
Direct calculations show that
	\[
		\rmxy(s) = c_2 s e^{-2s}, \qquad
		\rmxt(s) - \rmxz(s) = e^{-2s}.
	\]
Then 
	\begin{equation}\label{Eq:202501040834AM}
		\rmxy = c_3 (\rmxt - \rmxz) \ln(\rmxt - \rmxz), \qquad c_3 = - c_2/2 \in \mathbb{R} \setminus\{0\},
	\end{equation}
that is, the ruled ZMC surface in \eqref{Eq:202501041234PM} is the intersection of $\qthreep$ and the surface given by \eqref{Eq:202501040834AM}.

We show that this is a ruled surface.
If we let
	\[
		\rmxt - \rmxz = e^s,
	\]
then
	\[
		X = \vec{a}(s) \frac{\rmxx^2}{2} + \vec{b}(s) \rmxx + \vec{c}(s)
	\]
where
	\[
		\vec{a}(s) := \begin{pmatrix} 2 e^{-s} & 0 \\ 0 & 0 \end{pmatrix}, \quad
		\vec{b}(s) := \begin{pmatrix} 0 & 1 \\ 1 & 0 \end{pmatrix}, \quad
		\vec{c}(s) := e^s \begin{pmatrix} (c_3 s)^2 & i c_3 s \\ - i c_3 s & 1 \end{pmatrix}.
	\]
If we let
	\[
		\rmxx = u, \quad 
		c_3 s = v, 
	\]
then
	\begin{equation}\label{Eq:202501040851AM}
		X(u,v) = \begin{pmatrix} 2 e^{-cv} & 0 \\ 0 & 0 \end{pmatrix} \frac{u^2}{2} + \begin{pmatrix} 0 & 1 \\ 1 & 0 \end{pmatrix} u  + e^{cv} \begin{pmatrix} v^2 & iv \\ -iv & 1 \end{pmatrix}
	\end{equation}
Note that if $c=0$ then
\[
	X(u,v) = \begin{pmatrix} u^2 + v^2 & u+iv \\ u-iv & 1 \end{pmatrix},
\]
which is the surface given in \eqref{Eq:202501040822AM}.

To see how \eqref{Eq:202501040851AM} is obtained from the geodesic $X(u,0)$, we see that
	\begin{align*}
		X(u,v) 
			&= \left( \Phi_1(v) \Phi_2(v) \begin{pmatrix} u \\ 1 \end{pmatrix} \right) \left( \Phi_1(v) \Phi_2(v) \begin{pmatrix} u \\ 1 \end{pmatrix} \right) ^\star \\
			&= \left( \Phi_1(v) \Phi_2(v) \right) \begin{pmatrix} u^2 & u \\ u & 1 \end{pmatrix} \left( \Phi_1(v) \Phi_2(v) \right)^\star
	\end{align*}
where
	\[
		\Phi_1(v) = \begin{pmatrix} 1 & i v \\ 0 & 1 \end{pmatrix},	\quad
		\Phi_2(v) = \begin{pmatrix} e^{-cv/2} & 0 \\ 0 & e^{cv/2} \end{pmatrix},
	\]
that is, we apply to the geodesic $\Phi(u,0)$ the screw motion which is a composition of the hyperbolic rotation $\Phi_2(v)$ and the parabolic rotation $\Phi_1(v)$.

\begin{theorem}\label{Thm:202503130757AM}
	The following maps
		\[
			H^{a,b}(u,v) := 
				\begin{pmatrix} e^{(a + ib)v} & 0 \\ 0 & e^{-(a + ib)v} \end{pmatrix}	 
				\begin{pmatrix} u^2 & u \\ u & 1 \end{pmatrix} 
				\begin{pmatrix} e^{(a + ib)v} & 0 \\ 0 & e^{-(a + ib)v} \end{pmatrix}^\star		
		\]
	and
		\[
			\tilde{C}^P_c(u,v) :=
				\begin{pmatrix} 1 & iv \\ 0 & 1 \end{pmatrix}	 \begin{pmatrix} e^{ -\frac{cv}{2}} & 0 \\ 0 & e^{\frac{cv}{2}} \end{pmatrix}
				\begin{pmatrix} u^2 & u \\ u & 1 \end{pmatrix} 
				\begin{pmatrix} e^{ -\frac{cv}{2}} & 0 \\ 0 & e^{\frac{cv}{2}} \end{pmatrix}^\star
				\begin{pmatrix} 1 & iv \\ 0 & 1 \end{pmatrix}^\star
		\]
	are ruled ZMC surfaces in $\qthreep$, where $a, b, c$ are real constants with $b \not= 0$.
	$H^{a,b}$ is the helicoid and $C^P_c$ is the parabolic catenoid.
	When $a=0$, $c=0$, they are standard horosphere.

	Conversely, any spacelike ruled ZMC surface in $\qthreep$ must be one of the above up to the homothety and isometries of $\qthreep$.
\end{theorem}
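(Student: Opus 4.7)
The plan is to exploit the homogeneity of geodesics and the polynomial structure of the ruling direction to reduce the ZMC condition to a polynomial identity, then classify the solutions case by case. By Lemma~\ref{lemma:congruent}, every geodesic of $\qthreep$ is congruent to $\delta(t) = \begin{pmatrix} t^2 & t \\ t & 1 \end{pmatrix}$, so up to isometry any spacelike ruled surface admits the parametrization $X(s,t) = F(s)\delta(t)F(s)^\star$ for some smooth $F: I \to \sltc$ with $F(0) = I_2$. All local geometric data is then encoded by $\Omega(s) := F(s)^{-1}F'(s) = \begin{pmatrix} \alpha & \beta \\ \gamma & -\alpha\end{pmatrix} \in \lasltc$, whose real and imaginary parts I will subscript by $1$ and $2$.

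The first computational step is to expand $X_s, X_t, X_{ss}, X_{st}, X_{tt}$ and the lightlike Gauss map $\G$ in the asymptotic frame $\{\f_0,\f_1,\f_2,\f_3\}$ obtained by transporting the standard basis by the parabolic rotation $F(t)$. This makes the numerator $EN - 2FM + GL$ of the mean curvature a quartic $\sum_{k=0}^4 c_k(s)\, t^k$ in $t$, whose coefficients are polynomial in the components $\alpha_j, \beta_j, \gamma_j$ and their first derivatives. ZMC is therefore equivalent to the system $c_0 = c_1 = c_2 = c_3 = c_4 \equiv 0$.

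The classification then reduces to a case analysis organized by these coefficients from the top down. The vanishing of $c_4 = -2\gamma_2(\gamma_1^2+\gamma_2^2)$ forces $\gamma_2 \equiv 0$, and I would branch on whether $\alpha_2 \equiv 0$. In the branch $\alpha_2 \not\equiv 0$, successive use of $c_3 = c_1 = c_0 = 0$ produces $\gamma_1 \equiv 0$, a proportionality $\alpha_1 = d\alpha_2$, and a first-order relation tying $\beta_1$ to $\beta_2$ and $\alpha_1$; integrating $F^{-1}F' = \Omega$ and choosing a monotone reparametrization of $s$ then delivers the diagonal form of $F$ that yields $H^{a,b}$ when $d\neq 0$, or the horospherical form \eqref{Eq:202501040822AM} when $d=0$. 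In the branch $\alpha_2 \equiv 0$, the subcase $\beta_2 \not\equiv 0$ integrates to the upper-triangular form \eqref{Eq:202501041234PM}, while the subcase $\beta_2 \equiv 0$ makes $\Omega \in \lasltr$, forcing $F\in\sltr$ and $X$ to have $\rmxy \equiv 0$, which is non-spacelike and therefore excluded.

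The main obstacle will be the final identification step for \eqref{Eq:202501041234PM}: showing that the resulting implicit family is congruent to the parabolic catenoid $\tilde{C}^P_c$. I plan to compute $\rmxy(s)$ and $(\rmxt-\rmxz)(s)$ directly from $F(s)\delta(t)F(s)^\star$ to extract the algebraic relation $\rmxy = c_3(\rmxt-\rmxz)\ln(\rmxt-\rmxz)$, and then use the substitution $\rmxt - \rmxz = e^s$, $\rmxx = u$, $c_3 s = v$ to rewrite $X$ as $\delta(u)$ conjugated by a composition $\Phi_1(v)\Phi_2(v)$ of a parabolic and a hyperbolic rotation; the resulting expression matches $\tilde{C}^P_c$ exactly, with the horosphere recovered as the $c=0$ limit. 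The converse direction, that the listed $H^{a,b}$ and $\tilde{C}^P_c$ are themselves ruled ZMC surfaces, is already recorded in Section~\ref{Subsec:202503230110PM}, so combining the two directions completes the classification.
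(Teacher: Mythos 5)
Your proposal follows essentially the same route as the paper: reducing to $X(s,t)=F(s)\delta(t)F(s)^\star$ via congruence of geodesics, expanding in the asymptotic frame to get the quartic coefficients $c_0,\dots,c_4$, running the identical case analysis on $\gamma_2,\alpha_2,\beta_2$, and identifying the remaining branch with the parabolic catenoid through the relation $\rmxy=c_3(\rmxt-\rmxz)\ln(\rmxt-\rmxz)$ and the substitution $\rmxt-\rmxz=e^s$, $\rmxx=u$, $c_3 s=v$. This matches the paper's argument, including deferring the direct verification that $H^{a,b}$ and $\tilde{C}^P_c$ are ruled ZMC surfaces to Section~\ref{Subsec:202503230110PM}.
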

\begin{proof}
	The first claim follows from Subsections~\ref{SubSec:202503231257PM} and \ref{SubSec:202503231258PM}. The second claim follows from the contents of this section.
\end{proof}
\bibliographystyle{abbrv}

%\input{./sources/ReferencesWonjooVersion}
%%%\input{./sources/FromHFKtoHSK}
%%%\input{./sources/AssociateFamilyOfHelicoids}
%%%\input{./sources/ParabolicScrewMotion}
%%%\input{./sources/DeformationOfRotations}
%%\n
%%\input{./sources/RuledZMCInQ3+_OriginalIdeas}
%%\n
%%\input{./sources/Q3-case}

%%\n
%%\theendnotes

\end{document}